\newcommand{\ra}[1]{\renewcommand{\arraystretch}{#1}}
\definecolor{lred}{rgb}{1,0.8,0.5}
\definecolor{lblue}{rgb}{0.8,0.8,1}
\definecolor{dred}{rgb}{0.6,0,0}
\definecolor{dblue}{rgb}{0,0,0.7}
\definecolor{violet}{rgb}{0.5804,0.0000,0.8275}
\definecolor{purple}{rgb}{0.2400,0.5700,0.2500}
\definecolor{TGreen}{rgb}{0,0.50,0.10}
\def\@themcountersep{}
\newtheorem{lemma}{Lemma}[section]
\newtheorem{assumption}[lemma]{Assumption}
\newtheorem{theorem}[lemma]{Theorem}
\newtheorem{corollary}[lemma]{Corollary}
\newtheorem{remark}[lemma]{Remark}
\def\0{\mbox{\bf 0}}
\def\1{\mbox{\bf 1}}
\def\2{\mbox{\bf 2}}
\def\3{\mbox{\bf 3}}
\def\4{\mbox{\bf 4}}
\def\5{\mbox{\bf 5}}
\def\6{\mbox{\bf 6}}
\def\7{\mbox{\bf 7}}
\def\8{\mbox{\bf 8}}
\def\9{\mbox{\bf 9}}
\def\b{\mbox{\boldmath $b$}}
\newdimen\zhige \zhige=0pt
\def\chige#1{{\setbox\zhige\hbox{#1}\ifdim\ht\zhige=1ex\accent24 #1%
  \else\ooalign{\unhbox\zhige\crcr\hidewidth\char24\hidewidth}\fi}}
\def\BSigma{\mbox{\boldmath $\Sigma$}}
\def\g{\mbox{\boldmath $g$}}
\def\s{\mbox{\boldmath $s$}}
\def\u{\mbox{\boldmath $u$}}
\def\y{\mbox{\boldmath $y$}}
\def\R{\mathbb{R}}
\def\A{\mathcal{A}}
\def\B{\mathcal{B}}
\def\F{\mathcal{F}}
\def\M{\mathcal{M}}
\def\N{\mathcal{N}}
\def\W{\mathcal{W}}
\def\Q{\mathcal{Q}}
\def\S{\mathcal{S}}
\def\Z{\mathcal{Z}}
\def\bS{\mathbb{S}}
\def\C{\mbox{\boldmath $C$}}
\def\D{\mbox{\boldmath $D$}}
\def\E{\mbox{\boldmath $E$}}
\def\I{\mbox{\boldmath $I$}}
\def\O{\mbox{\boldmath $O$}}
\def\T{\mbox{\boldmath $T$}}
\def\U{\mbox{\boldmath $U$}}
\def\V{\mbox{\boldmath $V$}}
\def\X{\mbox{\boldmath $X$}}
\def\Z{\mbox{\boldmath $Z$}}
\def\AC{\mbox{$\cal A$}}
\def\FC{\mbox{$\cal F$}}
\def\LC{\mbox{$\cal L$}}
\def\MC{\mbox{$\cal M$}}
\def\NC{\mbox{$\cal N$}}
\def\OC{\mbox{$\cal O$}}
\def\SC{\mbox{$\cal S$}}
\def\WC{\mbox{$\cal W$}}
\def\Real{\mbox{$\mathbb{R}$}}
\def\SMAT{\mbox{$\mathbb{S}$}}
\def\pib{\mbox{\boldmath $\pi$}}
\title{A new dual spectral projected gradient method 
for log-determinant semidefinite programming 
with hidden clustering structures}
\author{Charles Namchaisiri\thanks{School of Computing, Tokyo Institute of Technology,  Japan. (\href{namchaisiri.c.aa@m.titech.ac.jp}{namchaisiri.c.aa@m.titech.ac.jp})} \and
Tianxiang Liu\thanks{School of Computing, Tokyo Institute of Technology,  Japan. (\href{liu@c.titech.ac.jp}{liu@c.titech.ac.jp})} 
\and Makoto Yamashita\thanks{School of Computing, Tokyo Institute of Technology,  Japan. (\href{Makoto.Yamashita@c.titech.ac.jp}{Makoto.Yamashita@c.titech.ac.jp})}  }
\numberwithin{equation}{section}
\begin{document}
\maketitle

\begin{abstract}

In this paper, we propose a new efficient method for a sparse Gaussian graphical model with hidden clustering structures by extending a dual spectral projected gradient (DSPG) method proposed by Nakagaki et al.~(2020). 
We establish the global convergence of the proposed method to an optimal solution,
and we show that the projection onto the feasible region can be solved with a low computational complexity by the use of the pool-adjacent-violators algorithm.  
Numerical experiments on synthetic data and real data demonstrate the efficiency of the proposed method. The proposed method takes 0.91 seconds to achieve a similar solution to the direct application of the DSPG method which takes 4361 seconds.
\end{abstract}

\section{Introduction}\label{sec:introduction}
In this paper, we address the following optimization problem:
\begin{equation}\label{primal}
\begin{split}
 \min_{\bm X\in\bS^n} & \ \ f(\bm X) := \bm C \bullet \bm X - \mu \log \det \bm X + \rho \sum_{i < j}|X_{ij}| + \lambda \sum_{i < j}\sum_{s < t}|X_{ij} - X_{st}|\\
\mbox{s.t.}  & \ \  \A (\bm X) = \bm b, \, \bm X \succ \bm 0,
\end{split}
\end{equation}
where $\mu > 0$, $\rho > 0$, $\lambda > 0$, $\bm C\in\bS^n$ and $\bm b\in\R^m$ are given,  and $\A: \bS^n \to \R^m$ is a linear map defined by $\A(\cdot) := (\bm A_1 \bullet \cdot,\, \bm A_2 \bullet \cdot ,\,\ldots, \bm A_m \bullet \cdot)^\top$ with given matrices $\bm A_1,\,\bm A_2,\,\dots,\bm A_m \in \bS^n$. 
The model \eqref{primal} was introduced by Lin et al.~\cite{lin2020estimation} to estimate sparse Gaussian graphical models with hidden clustering structures. The third and the fourth terms are introduced for inducing the sparsity in $\bm X$ and the clustering structure of the concentration matrix, respectively.

To describe the structure of the fourth term, define the linear map $vect: \SMAT^{n} \to \Real^{\frac{n(n-1)}{2}}$, which is the map that converts the strictly upper triangular part of the symmetric matrix sequentially to the vector of dimension $n(n-1)/2$. We define the mapping $\mathcal{K}: \{1,2,\dots,n\} \times \{1,2,\dots,n\} \to \{1,2,\dots,n(n-1)/2\}$ by $\mathcal{K}(i,j)$ is the position of the $(i,j)$th component of matrix $\bm X$ in $vect(\bm X)$. Since this mapping is bijective, we can also define the inverse mapping $\mathcal{K}^*: \{1,2,\dots,n(n-1)/2\} \to \{1,2,\dots,n\} \times \{1,2,\dots,n\}$ by $\mathcal{K}^*(a)$ is the position of the $a$th component of vector $vect(\bm X)$ in $\bm X$. We introduce a linear map 
$\Q : \Real^{\frac{n(n-1)}{2}} \to \SMAT^{\frac{n(n-1)}{2}}$ defined by
\begin{align}
[\Q(\bm X)]_{a,b} = \left\{
        \begin{array}{ll}
            X_{\mathcal{K}^*(a)} - X_{\mathcal{K}^*(b)} & \text{if} \  a < b \\
            {[\Q(\bm X)]_{b,a}} & \text{if} \ a > b \\
            0 & \text{if} \  a = b,
        \end{array}
        \right.
\end{align}
then the fourth term can be expressed with $\lambda \|\Q(\bm X)\|_1$. 

Lin et al.~\cite{lin2020estimation} reported that the estimated sparsity pattern obtained from this model is close to the true sparsity pattern, which is useful for recovering the graph structure.
They proposed a two-phase algorithm to solve \eqref{primal}. 

On the other hand, the model \eqref{primal} without the last (fourth) term 
\begin{equation}\label{primal-without-fourth}
	\begin{split}
	 \min_{\bm X\in\bS^n} & \ \ f_0(\bm X) := \bm C \bullet \bm X - \mu \log \det \bm X + \rho \sum_{i \ne j}|X_{ij}| \\
	\mbox{s.t.}  & \ \  \A (\bm X) = \bm b, \, \bm X \succ \bm 0,
	\end{split}
	\end{equation}
was well studied over the years and many solution methods have been proposed; see \cite{nakagaki2020dual,Wang2016,WST2010,YST2013}. In particular, Nakagaki et al.~\cite{nakagaki2020dual} proposed a dual spectral projected gradient (DSPG) method. The DSPG method solves the corresponding dual problem of \eqref{primal-without-fourth} below:
 \begin{equation}\label{dual_dspg}
\begin{split}
 \max_{\bm y\in\R^m,\,\bm W\in\bS^n} & \ \  \bm b^\top\bm y   + \mu\log\det\left(\bm C - \A^\top(\bm y) + \bm W \right) + n \mu - n \mu \log \mu \\
\mbox{s.t.}  & \ \ \|\bm W\|_{\infty} \le \rho\\
& \ \ \bm C - \A^\top (\bm y) + \bm W \succ\bm 0.
\end{split}
\end{equation}
We use $\|\bm W\|_{\infty} := \max \{ |W_{ij}| \}$ to denote the maximum absolute element of $\bm W$, and 
$\A^\top : \R^m \to \bS^n$  the adjoint operator of $\A$ (that is, $\A^\top (\y) := \sum_{i=1}^m \bm A_i y_i$).
The DSPG method is an iterative method; in each iteration, 
it evaluates the gradient of the objective function 
of \eqref{dual_dspg}, computes the projection onto 
$\| \bm W \|_{\infty} \le \rho$, and calculates  
the step length to generate the next iteration point
in the interior of the region of 
$\bm C - \A^\top (\bm y) + \bm W \succ\bm 0$.

By reformulating the fourth term in the objective function of \eqref{primal} similarly, the DSPG method is directly applicable to \eqref{primal}. The dual problem of \eqref{primal} is
\begin{equation}\label{dual_pre}
\begin{split}
 \max_{\bm y\in\R^m,\,\bm W\in\bS^n,\,\bm Z\in\bS^{\frac{n(n-1)}{2}}} & \ \  \bm b^\top\bm y   + \mu\log\det\left(\bm C - \A^\top(\bm y) + \frac{\bm W}{2} + \Q^\top (\bm Z)\right) + n \mu - n \mu \log \mu \\
\mbox{s.t.}  & \ \ \|\bm W\|_{\infty} \le \rho ,\, \|\bm Z\|_{\infty} \le \lambda,\\
& \ \ \bm C - \A^\top (\bm y) + \frac{\bm W}{2} + \Q^\top (\bm Z) \succ\bm 0.
\end{split}
\end{equation}
Here, $\bm b^\top$ is the transpose of the vector $\bm b$  and $\Q^\top : \bS^{\frac{n(n-1)}{2}} \to \bS^n$ the adjoint operator of $\Q$, thus,
\begin{align}
        [\Q^\top (\Z)]_{i,j} = \left\{
        \begin{array}{ll}
            \sum_{l > \mathcal{K}(i,j)} Z_{\mathcal{K}(i,j),l}
            - \sum_{l < \mathcal{K}(i,j)} Z_{l,\mathcal{K}(i,j)} & \text{if} \  i < j \\
            {[\Q^\top (\Z)]_{ji}} & \text{if} \ i > j \\
            0 & \text{if} \  i = j.
        \end{array}
        \right.
\end{align}

If we group two variables $\bm W$ and $\bm Z$ in \eqref{dual_pre}, then the DSPG method is applicable. However, such a direct application of the DSPG method requires the gradient of the objective function with respect to $\bm W$ and $\bm Z$.
In particular, the evaluation of the gradient for $\bm Z$ 
amounts to $O(n^4)$ operators, which is expensive even for a moderate size of $n$. To resolve this issue, it is necessary to reduce the computational cost by exploiting the structure of $\Q^\top$.

In this paper, we propose a new efficient method for solving \eqref{primal} by extending the DSPG method. 
Specifically, 
to avoid the high computational cost due to $\Q^\top$,
we first reformulate \eqref{dual_pre}, then apply a modified DSPG method to the reformulated model. In the reformulated model, we move the difficult structure of $\Q^\top$ from the objective function into the constraints, but this brings an extra cost for computing the projection onto the new constraint set. We demonstrate that such a subproblem can be equivalently written as the projection problem onto the ordered constraint set and this projection can be efficiently computed with the pool-adjacent-violators algorithm (PAVA)~\cite{PAVApaper,henzi2020accelerating}.

In this paper, our main contributions are as follows.
\begin{itemize}
\item We propose a new method (Algorithm~\ref{alg}) for solving \eqref{primal}, and establish the convergence analysis of the method. 

\item We discuss how to solve the subproblem of Algorithm~\ref{alg} efficiently and show that the computational complexity of the subproblem is significantly decreased compared with that of the direct application of the DSPG method to \eqref{dual_pre}.

\item We illustrate the  efficiency of Algorithm~\ref{alg} by numerical experiments (Section~\ref{numerical}). We obtain 
a notable reduction in the computation time from the DSPG method.
\end{itemize}

The remainder of this paper is organized as follows. 
We describe our DSPG-based method in Section~\ref{sec:method} and establish its convergence analysis in Section~\ref{analysis}. In Section~\ref{subproblem}, we discuss how to solve the subproblem efficiently and evaluate the computational complexity. In Section~\ref{numerical}, we show numerical results to verify the efficiency of our method.
Finally, we conclude this paper and discuss future directions in  Section~\ref{sec:conclusion}.

\subsection{Notation}\label{sec:notation}

Let $\bS^n$ be the set of symmetric matrices of dimension $n$.
The inner product between two matrices $\bm A, \bm X \in \bS^n$ is 
defined by $\bm A \bullet \bm X := \sum_{i=1}^n \sum_{j=1}^n A_{ij} X_{ij}$. We use the notation $\bm X \succ (\succeq) \ \bm 0$ to denote that 
$\bm X \in \bS^n$ is positive definite (positive semidefinite, respectively).

Given $\bm x\in\R^n$, 
let $\|\bm x\|:= \sqrt{\bm x^\top \bm x}$ denote the Euclidean norm.
 Given $\bm X\in\bS^n$, we let $\|\bm X\|: = 
 \sqrt{\bm X \bullet \bm X}$ denote its 
 Frobenius norm.
In the direct product space $\R^m\times\bS^n\times\bS^{\ell}$, the inner product for $\bm U_1 = (\bm y_1,\,\bm W_1,\,\bm S_1)\in\R^m\times\bS^n\times\bS^{\ell}$ and $\bm U_2 = (\bm y_2,\,\bm W_2,\,\bm S_2)\in \R^m\times\bS^n\times\bS^{\ell}$ is defined as 
 \begin{equation*}
 \langle\bm U_1,\,\bm U_2\rangle := \bm y_1^\top\bm y_2 + \bm W_1\bullet\bm W_2 + \bm S_1\bullet\bm S_2.
 \end{equation*}
We also define the norm of $\bm U \in \R^m\times\bS^n\times\bS^{\ell}$ as $||\bm U||:= \sqrt{\langle\bm U,\,\bm U\rangle}$.

Given a linear map $\A$, its adjoint operator is written as $\A^\top$. 
We define the operator norm of $\A :  \SMAT^n \to \Real^m$ 
as $\|\A\| := \sup_{\bm X \ne \0} \left\{\frac{\|\A(\bm X)\|}{\|\bm X\|}\right\}$, and the operator norm of $\A^\top : \Real^m \to \SMAT^n$ as $\|\A^\top\| := \sup_{\bm y \ne \0} \left\{\frac{\|\A^\top(\bm y)\|}{\|\bm y\|}\right\}$. 
For a closed convex set $\Omega$, we use $P_{\Omega}(\cdot)$ to denote the projection onto $\Omega$, i.e.,
\begin{equation*}
P_{\Omega}(\cdot) := \arg\min_{\bm x\in\Omega}\|\bm x - \cdot\|.
\end{equation*}
For simplicity of notation, let 
\begin{equation*}
\W:= \{\bm W \in \SMAT^n \mid \|\bm W\|_{\infty} \le \rho\}.
\end{equation*}

\section{A new DSPG-based method}\label{sec:method}

Nakagaki et al.~\cite{nakagaki2020dual} proposed a dual spectral projected gradient (DSPG) method to solve \eqref{dual_dspg} that does not involve $\bm Z$. 
A key idea of the DSPG method is to combine a non-monotone line search projected gradient method with a feasible step adjustment. As stated in Section~\ref{sec:introduction}, if we directly apply the DSPG method to solve \eqref{dual_pre}, we need to calculate the gradient of the objective function in \eqref{dual_pre} in each iterate, which is expensive due to the structure of $\bm Z$. 

To resolve this difficulty, we introduce a new variable that represents $\Q^\top (\bm Z)$. Specifically, we introduce a new set:
\begin{equation}
\S:= \left\{ \bm S \in \bS^n \ \mid  \bm S = \Q^\top (\bm Z), \  \|\bm Z\|_{\infty} \le \lambda\right\}.\label{S-definition}
\end{equation}
 Using this set and $\W$ and combining the variables $(\bm y, \bm W, \bm S)$ into one composite variable $\bm U$, we can rewrite \eqref{dual_pre} as the following optimization problem:
\begin{equation}\label{dual}
\begin{split}
 \max_{\bm U:= \left(\bm y,\, \bm W,\, \bm S\right)\in\R^m\times\bS^n\times\bS^n} & \ \  g(\bm U):= \bm b^\top\bm y   + \mu\log\det\left(\bm C - \A^\top (\bm y) + \frac{\bm W}{2} + \bm S\right) + n \mu - n \mu \log \mu \\
\mbox{s.t.}  & \ \ \bm W\in\W,\, \bm S\in\S, \ 
 \bm C - \A^\top (\bm y) + \frac{\bm W}{2} + \bm S \succ\bm 0.
\end{split}
\end{equation}

We list the blanket assumption which also appears in \cite{nakagaki2020dual} as follows:
\begin{assumption}\label{assp}
We assume the following statements hold for problem \eqref{primal}
and its corresponding dual \eqref{dual}:
\begin{itemize}
\item[{\rm (i)}] The linear map $\A$ is surjective;
\item[{\rm (ii)}] The primal problem~\eqref{primal} has a strictly feasible solution $\widehat{\bm X} \succ \bm 0$ such that $\A (\widehat{\bm X}) = \bm b$;
\item[{\rm (iii)}] The dual problem~\eqref{dual} has a feasible solution. 
\end{itemize}
\end{assumption}

With the introduction of a new variable $\bm S$ in \eqref{dual}, the computational cost for the gradient is now reduced. The computational cost of $\nabla_{\S} g$ is $O(n^2)$,
which is significantly reduced from the $O(n^4)$ cost 
of the gradient of the objective function in \eqref{dual_pre}
with respect to $\bm Z$, since 
the numbers of elements in $\bm S$ and $\bm Z$ are 
$O(n^2)$ and $O(n^4)$, respectively.
On the other hand, the difficulty is now embedded into the computation of the projection onto set $\S$ in \eqref{S-definition}, which we will discuss about this problem in Section~\ref{subproblem}. 

Let $\F$ denote the feasible set of \eqref{dual}. Then we can write it as $\F = \MC \cap \NC$, where
\begin{align}
\M &:= \mathbb{R}^m \times \mathcal{W} \times \mathcal{S}, \\
\N &:= \left\{(\bm y,\bm W,\bm S) \in \R^m \times \bS^n \times \bS^{n} : \bm C  - \A^\top (\bm y) + \frac{\bm W}{2} + \bm S \succ \bm 0\right\}.
\end{align}
In the proximal gradient step, we compute $\nabla g$ in \eqref{dual} and the projection onto $\M$. Note that $\W$ is a simple set whose projection operator yields a closed-form solution. In contrast, the projection operator of set $\S$ is more complicated; it will be discussed in Section~\ref{subproblem} and it is 
the main difference from Nakagaki et al.~\cite{nakagaki2020dual}.

We present the framework of our new DSPG-based algorithm as Algorithm~\ref{alg}. For the simplicity of notation in the algorithm, we introduce a linear map
\begin{equation*}
\B(\bm U) := -\A^\top (\bm y) + \frac{\bm W}{2} + \bm S, \ \ \mbox{where}\ \   \bm U:= \left(\bm y,\, \bm W,\, \bm S\right)\in\R^m\times\bS^n\times\bS^n,
\end{equation*}
which allows us to rewrite the set $\N$ simply as $\N = \left\{\bm U\in\R^m\times\bS^n\times\bS^n: \bm C + \B(\bm U) \succ 0\right\}$. 
We let $\bm U^k:= \left(\bm y^k,\, \bm W^k,\, \bm S^k\right)$. 

\begin{algorithm}[H]
\caption{A new DSPG-based algorithm for solving \eqref{dual}}
\label{alg}
\begin{description}

\item [Initialization.] Choose  parameters $\varepsilon > 0,\, \tau \in (0,1),\, \gamma \in (0,1), \, 0 <\beta< 1,\, 0 < \alpha_{\min} < \alpha_{\max} < \infty$ and  integer  $M > 0$. Take  $\bm U^0 \in \F$ and $\alpha_0 \in [\alpha_{\min} ,\, \alpha_{\max}]$. Set $k = 0$.

\item [Step 1.] Let $\bm R^k := \left(\Delta\bm y^k_{(1)},\, \Delta\bm W^k_{(1)},\, \Delta\bm S^k_{(1)} \right) = P_{\M}\left(\bm U^k + \nabla g(\bm U^k)\right)  - \bm U^k$. If $\|\bm R^k\| \le \varepsilon$, terminate;  otherwise, go to \textbf{Step 2}.

\item [Step 2.]   Let $\bm D^k:= \left(\Delta\bm y^k,\, \Delta\bm W^k,\, \Delta\bm S^k \right) = P_{\M}\left(\bm U^k + \alpha_k\nabla g(\bm U^k)\right) - \bm U^k$. Let $\bm L_k\bm L_k^\top:= \bm C + \B(\bm U^k)$ be the Cholesky decomposition and $\theta$ be the minimum eigenvalue of $\bm L_k^{-1}\B(\bm D^k) \left(\bm  L_k^\top\right)^{-1}$. Set
\begin{equation*}
 \nu_k :=
\begin{cases}
  1 & \text{if} \ \theta \ge 0,\\
 \min\{1,\, -\tau/\theta \}  & \text{otherwise}.
\end{cases}
\end{equation*}
Apply a line search to find the largest element $\sigma_k \in\{1,\,\beta,\,\beta^2, \ldots\}$ such that
 \begin{equation}
g(\bm U^k + \sigma_k \nu_k\bm D^k) \geq  \min\limits_{[k - M + 1]_+ \le j \le k} g(\bm U^j) + \gamma\sigma_k \nu_k\langle\nabla g(\bm U^k),\,\bm D^k\rangle.
\end{equation}



\item [Step 3.]  Let $\bm U^{k+1} = \bm U^k + \sigma_k \nu_k\bm D^k$. Let $p_k:= \langle\bm U^{k+1} - \bm U^k,\, \nabla g(\bm U^{k+1}) - \nabla g(\bm U^k)\rangle$. Set
\begin{equation*}
 \alpha_{k+1} :=
\begin{cases}
  \alpha_{\max} & \text{if} \ p_k \ge 0,\\
\min\left\{\alpha_{\max},\,\max\left\{\alpha_{\min},\, -\|\bm U^{k+1} - \bm U^k\|^2/p_k\right\}\right\} & \text{otherwise}.
\end{cases}
\end{equation*}
Set $k \leftarrow k + 1$. Return to \textbf{Step 1}.

\end{description}

\end{algorithm}

Note that the projection $P_{\M}(\bm U)$ can be decomposed as $(\y , P_{\WC}(\bm W) , P_{\SC}(\bm S))$, since the projections onto $\R, \mathcal{W}, \mathcal{S}$ are independent.
In addition, the DSPG method~\cite{nakagaki2020dual} and Algorithm~\ref{alg}
are non-monotone gradient methods,
as Step 2 guarantees a non-monotone increase in the objective
function (See~\cite{birgin2000nonmonotone}).

\begin{remark}[Remark on the computation in Algorithm~\ref{alg}]
	\label{REMA:X-U}
Algorithm~\ref{alg} involves the computation of projection onto $\M:=\R^m\times\W\times\S$. While the projection onto $\W$ has a closed-form solution, the projection onto $\S$ is not easy, which will be discussed in Section~\ref{subproblem}. Another computational cost lies in the computation of $\nabla g(\bm U^k)$. Denote
\begin{equation}
\bm X(\bm U) := \mu \left(\bm C + \B(\bm U)\right)^{-1}.
\end{equation}
Then we can express 
\begin{equation*}
\nabla g(\bm U)  = \Big(\bm b - \A^\top\left(\bm X(\bm U)\right),\, \frac{1}{2}\bm X(\bm U),\, \bm X(\bm U)\Big).
\end{equation*}

\end{remark}

\section{Convergence Analysis}\label{analysis}

In this section, we present the convergence properties of Algorithm~\ref{alg}. We start with the following lemma stating the boundedness of the sequence.

\begin{lemma}[Boundedness of sequence]\label{LEMM:bounded}
Let sequence $\{\bm U^k\}$ be generated by Algorithm~\ref{alg}. Define a level set
\begin{equation*}
\mathcal{L} :=  \left\{\bm U \in \R^m \times \mathbb{S}^n \times \mathbb{S}^n : \bm U \in \F,  \ g(\bm U) \ge g(\bm U^0)\right\}.
\end{equation*}
Then, $\{\bm U^k\}\subseteq\mathcal{L}$ and $\{\bm U^k\}$ is bounded. 
\end{lemma}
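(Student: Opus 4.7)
The plan is to establish three facts in sequence: (a) the iterates stay in $\F$, (b) $g(\bm U^k)\ge g(\bm U^0)$ for all $k$, so $\bm U^k\in\mathcal{L}$, and (c) $\mathcal{L}$ is bounded.

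For (a), I would argue by induction. Assuming $\bm U^k\in\F$, I split $\F=\M\cap\N$ and check each piece. Membership in $\M$ follows because $\M=\R^m\times\W\times\S$ is convex, $\bm U^k\in\M$ by hypothesis, and $\bm U^k+\bm D^k=P_{\M}(\bm U^k+\alpha_k\nabla g(\bm U^k))\in\M$ by definition of the projection; since $\sigma_k\nu_k\in(0,1]$, the update $\bm U^{k+1}=(1-\sigma_k\nu_k)\bm U^k+\sigma_k\nu_k(\bm U^k+\bm D^k)$ is a convex combination and thus lies in $\M$. Membership in $\N$ uses the Cholesky factor introduced in Step~2: writing
\begin{equation*}
\bm C+\B(\bm U^{k+1})=\bm L_k\bigl(\bm I+\sigma_k\nu_k\bm L_k^{-1}\B(\bm D^k)(\bm L_k^\top)^{-1}\bigr)\bm L_k^\top,
\end{equation*}
the minimum eigenvalue of the middle factor is $1+\sigma_k\nu_k\theta\ge 1-\tau>0$ by the construction of $\nu_k$, so $\bm C+\B(\bm U^{k+1})\succ\bm 0$.

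For (b), the key observation is the standard projection inequality: since $\bm U^k\in\M$ and $\bm U^k+\bm D^k=P_{\M}(\bm U^k+\alpha_k\nabla g(\bm U^k))$,
\begin{equation*}
\alpha_k\langle\nabla g(\bm U^k),\bm D^k\rangle\;\ge\;\|\bm D^k\|^2\;\ge\;0.
\end{equation*}
Substituting this nonnegativity into the non-monotone line search criterion in Step~2 gives $g(\bm U^{k+1})\ge\min_{[k-M+1]_+\le j\le k}g(\bm U^j)$. A straightforward induction (the base case is trivial, and the inductive hypothesis makes the minimum on the right $\ge g(\bm U^0)$) then yields $g(\bm U^k)\ge g(\bm U^0)$ for every $k$, which together with (a) places $\bm U^k\in\mathcal{L}$.

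For (c), the components $\bm W^k$ and $\bm S^k$ are automatically bounded, since $\W$ is the $\|\cdot\|_\infty$-ball of radius $\rho$ and $\S=\Q^\top(\{\bm Z:\|\bm Z\|_\infty\le\lambda\})$ is the image of a bounded set under a linear map. The real work is bounding $\bm y^k$, and this is where I expect the main obstacle to lie. The idea is to exploit strict primal feasibility (Assumption~\ref{assp}(ii)) via $\bm b^\top\bm y=\widehat{\bm X}\bullet\A^\top(\bm y)$. Setting $\bm G:=\bm C+\B(\bm U)\succ\bm 0$ and using $\A^\top(\bm y)=\bm C+\tfrac12\bm W+\bm S-\bm G$, the inequality $g(\bm U)\ge g(\bm U^0)$ rearranges to
\begin{equation*}
\widehat{\bm X}\bullet\bm G-\mu\log\det\bm G\;\le\; C_1
\end{equation*}
for some constant $C_1$ depending on $g(\bm U^0)$ and the bounded quantities. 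Then $\widehat{\bm X}\succ\bm 0$ gives $\widehat{\bm X}\bullet\bm G\ge\lambda_{\min}(\widehat{\bm X})\,\mathrm{tr}(\bm G)$, and AM--GM on the eigenvalues of $\bm G$ gives $\log\det\bm G\le n\log(\mathrm{tr}(\bm G)/n)$. Substituting, the function $t\mapsto\lambda_{\min}(\widehat{\bm X})\,t-n\mu\log t$ of $t=\mathrm{tr}(\bm G)$ is bounded above, and since it blows up both as $t\to 0^+$ and as $t\to\infty$, $t$ is confined to a compact subinterval of $(0,\infty)$. Positive definiteness of $\bm G$ then gives $\|\bm G\|_F\le\mathrm{tr}(\bm G)$, bounded. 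Finally, $\A^\top(\bm y)$ is bounded in norm, and Assumption~\ref{assp}(i) (surjectivity of $\A$) forces $\A^\top$ to be injective, yielding a coercivity bound $\|\A^\top(\bm y)\|\ge c\|\bm y\|$ for some $c>0$; hence $\{\bm y^k\}$ is bounded and $\{\bm U^k\}$ is bounded.
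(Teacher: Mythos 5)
Your proposal is correct and follows essentially the same route as the paper's proof: induction with the convex-combination argument for $\M$ and the Cholesky/eigenvalue argument for $\N$, the projection inequality $\alpha_k\langle\nabla g(\bm U^k),\bm D^k\rangle\ge\|\bm D^k\|^2$ feeding the non-monotone line-search condition to get $g(\bm U^k)\ge g(\bm U^0)$, and the strictly feasible $\widehat{\bm X}$ plus surjectivity of $\A$ to bound $\bm C+\B(\bm U^k)$ and hence $\bm y^k$. Your part (c) merely spells out the coercivity step (via $\widehat{\bm X}\bullet\bm G\ge\lambda_{\min}(\widehat{\bm X})\,\mathrm{tr}(\bm G)$ and AM--GM on $\log\det\bm G$) that the paper leaves implicit, which is a welcome addition but not a different argument.
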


\begin{proof}
We first prove that $\{\bm U^k\}\subseteq\mathcal{L}$ by induction. It is easy to see that $\bm U^0\in\mathcal{L}$. We assume that $\bm U^k\in\mathcal{L}$ holds for some $k \ge 0$. Since $\sigma_k\nu_k\in(0,\,1]$, we have that $\bm U^{k+1}$ is a convex combination of $P_{\mathcal{M}}\left(\bm U^k + \alpha_k \nabla g(\bm U^k)\right)$ and $\bm U^k$, which together with $\bm U^k\in\mathcal{M}$ and the convexity of $\mathcal{M}$ implies that $\bm U^{k+1}\in\mathcal{M}$. Moreover,  we see from the update of $\nu_k$ that
\begin{equation*}
\begin{split}
& \bm L_k^{-1}\big(\bm C + \B(\bm U^{k+1})\big)\left(\bm  L_k^\top\right)^{-1} =\bm L_k^{-1}\big(\bm C + \B\big(\bm U^k + \sigma_k\nu_k\bm D^k\big)\big)\left(\bm  L_k^\top\right)^{-1} \\
= &  \bm L_k^{-1}\big(\bm C + \B(\bm U^k)\big)\left(\bm  L_k^\top\right)^{-1} + \sigma_k\nu_k \bm L_k^{-1}\B(\bm D^k)\left(\bm  L_k^\top\right)^{-1} = \bm I + \sigma_k\nu_k \bm L_k^{-1}\B(\bm D^k)\left(\bm  L_k^\top\right)^{-1} \succeq \bm I + \sigma_k\nu_k\theta\bm I \succ \bm 0.
\end{split}
\end{equation*}
This together with $\bm U^k\in\N$ implies that $\bm U^{k+1}\in\N$, therefore, it holds that $\bm U^{k+1}\in\F$. By induction, this proves that $\bm U^k\in\F$ for all $k$.

On the other hand, since $\M$ is convex and $\bm U^k\in\M$, using Proposition 2.1~(1) in \cite{HAGER08}, we have for all $k$ that 
\begin{equation*}
\begin{split}
0 & \ge \left\langle P_{\M}\left(\bm U^k + \alpha_k\nabla g(\bm U^k)\right) - \left(\bm U^k + \alpha_k\nabla g(\bm U^k)\right) ,\, P_{\M}\left(\bm U^k + \alpha_k\nabla g(\bm U^k)\right) - \bm U^k\right\rangle \\
 & = \left\langle\bm D^k -\alpha_k \nabla g(\bm U^k) ,\, \bm D^k\right\rangle = \|\bm D^k\|^2 - \alpha_k\left\langle\nabla g(\bm U^k),\,\bm D^k\right\rangle.
\end{split}
\end{equation*}
This implies that $\left\langle\nabla g(\bm U^k),\,\bm D^k\right\rangle\ge \frac{\|\bm D^k\|^2}{\alpha_k} \ge \frac{\|\bm D^k\|^2}{\alpha_{\max}} \ge 0$, which together with the line search in Step 2 in Algorithm~\ref{alg} proves that $g(\bm U^k)\ge g(\bm U^0)$. Consequently, we have $\{\bm U^k\}\subseteq\mathcal{L}$.

It then shows that $\{\bm U^k\}$ is bounded. 
Note that $\bm U^k\in\F = \M\cap\N$ and $\M = \R^n\times\W\times\S$, where $\W$ is bounded, $\S$ is the image of $\Q$ on a bounded set. Therefore, $\{\bm W^k\}$ and $\{\bm S^k\}$ are bounded, thus showing the boundedness of $\{\bm y^k\}$ is enough. 
By $\{\bm U^k\}\subseteq\mathcal{L}$ and Assumption~\ref{assp}~(i), we have 
\begin{equation*}
\begin{split}
g(\bm U^0) \le g(\bm U^k) & = \bm b^\top\bm y^k + \mu\,{\rm log}\,{\rm det}\left(\bm C + \B (\bm U^k) \right)\\
& = \langle\widehat{\bm X},\, \A^\top(\bm y^k)\rangle + \mu\,{\rm log}\,{\rm det}\left(\bm C + \B (\bm U^k) \right) \\
& = \langle\widehat{\bm X},\, \bm C + \frac{\bm W^k}{2} + \bm S^k \rangle  - \langle\widehat{\bm X},\, \bm C + \B (\bm U^k) \rangle + \mu\,{\rm log}\,{\rm det}(\bm C + \B (\bm U^k)). 
\end{split}
\end{equation*}
Since $\widehat{\bm X}\succ\bm 0$, and $\{\bm W^k\}$ and $\{\bm S^k\}$ are bounded, we then have $\{\bm C + \B (\bm U^k)\}$ is bounded. 
This together with Assumption~\ref{assp}~(ii) ($\A$ is surjective) indicates that $\{\bm y^k\}$ is bounded. This completes the proof.
\end{proof}

\begin{theorem}[Optimality condition]\label{THEO:optimality}
$\bm U^*$ is an optimal solution of \eqref{dual} if and only if $\bm U^*\in\F$ and there exists some $\alpha > 0$ such that
\begin{equation*}
P_{\M}\left(\bm U^* + \alpha\nabla g(\bm U^*)\right) = \bm U^*.
\end{equation*}
\end{theorem}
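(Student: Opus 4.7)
The plan is to reduce this theorem to the first-order optimality condition for concave maximization over a convex set, paired with the standard variational characterization of the projection. Two structural facts drive the argument: (i) the objective $g$ is concave on $\N$, since $\mu\log\det$ is concave on the positive definite cone and $\bm b^\top\bm y$ is linear in $\bm U$; and (ii) $\F = \M \cap \N$ with $\M$ convex (as a product of $\R^m$, $\W$, and the image $\S$ of a convex set under a linear map) and $\N$ open. Together these guarantee that, at any feasible $\bm U^*$, small perturbations toward points of $\M$ remain inside $\F$, so the constraint behaves locally as $\bm U \in \M$.

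For the necessity direction, I would pick an arbitrary $\bm U \in \M$ and set $\bm U^*(t) := \bm U^* + t(\bm U - \bm U^*)$. Convexity of $\M$ gives $\bm U^*(t) \in \M$, and openness of $\N$ combined with $\bm U^* \in \N$ gives $\bm U^*(t) \in \N$ for all sufficiently small $t > 0$, so $\bm U^*(t) \in \F$. Optimality then yields $g(\bm U^*(t)) \leq g(\bm U^*)$; dividing by $t$ and letting $t \downarrow 0$ produces the variational inequality $\langle \nabla g(\bm U^*),\, \bm U - \bm U^* \rangle \leq 0$ for every $\bm U \in \M$. For any fixed $\alpha > 0$, this is precisely the projection inequality (Proposition~2.1(1) in \cite{HAGER08}, the same tool used in Lemma~\ref{LEMM:bounded}) that identifies $\bm U^*$ with $P_{\M}(\bm U^* + \alpha \nabla g(\bm U^*))$.

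For the sufficiency direction, if $P_{\M}(\bm U^* + \alpha \nabla g(\bm U^*)) = \bm U^*$ for some $\alpha > 0$, the projection characterization gives $\langle \alpha \nabla g(\bm U^*),\, \bm U - \bm U^* \rangle \leq 0$ for all $\bm U \in \M$, and dividing by $\alpha > 0$ yields the same inequality with $\nabla g(\bm U^*)$. Restricting to $\bm U \in \F \subseteq \M$ and invoking concavity of $g$,
\[
g(\bm U) \leq g(\bm U^*) + \langle \nabla g(\bm U^*),\, \bm U - \bm U^* \rangle \leq g(\bm U^*),
\]
so $\bm U^*$ maximizes $g$ over $\F$.

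The only delicate point, and the one I would verify most carefully, is the step that upgrades the variational inequality from $\F$ to the larger convex set $\M$ in the necessity direction. Without the openness of $\N$, one could only derive the inequality for $\bm U \in \F$, which is not strong enough to characterize the projection onto $\M$. Everything else is a routine consequence of concavity and the standard projection characterization.
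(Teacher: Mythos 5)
Your proof is correct. It reaches the same endpoint as the paper's proof --- the variational inequality $\langle \nabla g(\bm U^*),\, \bm U - \bm U^*\rangle \le 0$ for all $\bm U\in\M$, converted to the fixed-point form via the standard projection characterization (Proposition~2.1(1) in \cite{HAGER08}) --- but you get there by a more elementary, self-contained route. The paper phrases optimality as $\bm 0 \in \nabla g(\bm U^*) - N_{\F}(\bm U^*)$ and then invokes a normal-cone intersection rule ($N_{\M\cap\N} = N_{\M}$ when $\N$ is open, cited from \cite{cui}) to discard $\N$; your line-segment argument ($\bm U^*(t) = \bm U^* + t(\bm U - \bm U^*)$ stays in $\F$ for small $t>0$ by convexity of $\M$ and openness of $\N$) proves exactly the content of that citation in this special case, and your sufficiency direction replaces the normal-cone equivalence with a one-line concavity estimate $g(\bm U) \le g(\bm U^*) + \langle\nabla g(\bm U^*), \bm U - \bm U^*\rangle$. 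What your version buys is independence from the external normal-cone calculus (no need to verify the hypotheses of the cited theorem); what the paper's version buys is brevity and a statement of the optimality condition in a form ($\nabla g(\bm U^*) \in N_{\M}(\bm U^*)$) that is reusable elsewhere. You correctly identified the one delicate point --- upgrading the inequality from $\F$ to $\M$ --- and your handling of it, relying on $\bm U^* \in \N$ with $\N$ open and convex so that $g$ is differentiable and concave there, is sound.
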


\begin{proof}
Note that we can rewrite \eqref{dual} as
\begin{equation}
\max_{\bm U}\ \ g(\bm U) - \delta_{\F}(\bm U), \label{eq:max-g-delta-F}
\end{equation}
where $\delta_{\F}$ is the indicator function with respect to $\FC$.
Let $N_{\F}(\bm U)$ be the normal cone at $\bm U$.
Since the objective function in \eqref{eq:max-g-delta-F} is a convex problem, $\bm U^*$ is an optimal solution if and only if $\bm U^*\in\F$ and
\begin{equation*}
\bm 0 \in \nabla g(\bm  U^*) - N_{\F}(\bm U^*) = \nabla g(\bm  U^*) - N_{\M\cap\N}(\bm U^*) = \nabla g(\bm  U^*) - N_{\M}(\bm U^*),
\end{equation*}
where the last equality follows from \cite[Theorem~3.30]{cui} and that $\N$ is an open set. This can be rewritten as $\nabla g(\bm  U^*)\in N_{\M}(\bm U^*)$, which is further equivalent to $\bm U^*\in\F$ and the existence of $\alpha > 0$ such that 
\begin{equation*}
\langle\bm U - \bm U^*,\, \alpha\nabla g(\bm U^*)\rangle \le 0, \ \ \ \forall\, \bm U\in\M.
\end{equation*}
We rewrite the above as $\bm U^*\in\F$ and
\begin{equation*}
\langle\bm U - \bm U^*,\, \left(\bm U^* + \alpha\nabla g(\bm U^*)\right) - \bm U^*\rangle \le 0, \ \ \ \forall\, \bm U\in\M.
\end{equation*}
Due to the uniqueness of the projection onto the convex set $\M$ and Proposition 2.1~(1) in \cite{HAGER08}, we see that this is equivalent to $\bm U^*\in\F$ and the existence of $\alpha > 0$ such that
\begin{equation*}
P_{\M}\left(\bm U^* + \alpha\nabla g(\bm U^*)\right) = \bm U^*.
\end{equation*}
This completes the proof.
\end{proof}
The following corollary can be derived from 
Lemma~\ref{LEMM:bounded}.
\begin{corollary}\label{CORO:bound-on-X}
	A set $\left\{ \X(\bm U) \mid \bm U \in \LC \right\}$ is bounded. 	
\end{corollary}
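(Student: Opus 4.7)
The plan is to exploit the level-set inequality $g(\bm U^0) \le g(\bm U)$ that characterizes $\mathcal{L}$, together with the strict primal feasibility of $\widehat{\bm X}$ from Assumption~\ref{assp}~(ii), to control the eigenvalues of $\bm C + \B(\bm U)$ both from above and away from zero. Once such eigenvalue bounds are in place, the claim follows immediately because $\X(\bm U) = \mu (\bm C + \B(\bm U))^{-1}$ and its spectrum is just $\{\mu / \lambda_i(\bm C + \B(\bm U))\}$.

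First, I would recycle the chain of equalities from the proof of Lemma~\ref{LEMM:bounded}. Using $\A(\widehat{\bm X}) = \bm b$, I rewrite the inequality $g(\bm U^0) \le g(\bm U)$ for every $\bm U \in \mathcal{L}$ as
\begin{equation*}
\langle \widehat{\bm X},\, \bm C + \B(\bm U)\rangle - \mu\,\log\det\!\bigl(\bm C + \B(\bm U)\bigr) \le \Bigl\langle \widehat{\bm X},\, \bm C + \tfrac{\bm W}{2} + \bm S \Bigr\rangle - g(\bm U^0) + n\mu - n\mu\log\mu.
\end{equation*}
Since $\bm W \in \W$ and $\bm S \in \S$ live in bounded sets, the right-hand side is bounded above by a constant $M$ that is independent of $\bm U \in \mathcal{L}$.

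Next, I would diagonalize $\bm C + \B(\bm U) \succ \bm 0$ and denote its eigenvalues by $\lambda_1, \dots, \lambda_n > 0$. Using $\lambda_{\min}(\widehat{\bm X}) > 0$, I bound $\langle \widehat{\bm X},\, \bm C + \B(\bm U)\rangle \ge \lambda_{\min}(\widehat{\bm X}) \sum_{i=1}^n \lambda_i$, so the previous inequality gives
\begin{equation*}
\sum_{i=1}^n \bigl( \lambda_{\min}(\widehat{\bm X})\, \lambda_i - \mu \log \lambda_i \bigr) \le M.
\end{equation*}
The one-dimensional function $\phi(t) := \lambda_{\min}(\widehat{\bm X})\, t - \mu \log t$ is strictly convex on $(0,\infty)$, diverges to $+\infty$ both as $t \to 0^+$ and as $t \to +\infty$, and attains a finite minimum $\phi_{\min}$. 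Dropping all but one term in the sum yields $\phi(\lambda_i) \le M - (n-1)\phi_{\min}$ for every $i$, so each $\lambda_i$ must lie in a compact subinterval of $(0,\infty)$ that depends only on $M$, $\mu$, and $\lambda_{\min}(\widehat{\bm X})$, not on $\bm U$.

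Finally, this uniform upper bound and uniform positive lower bound on the eigenvalues of $\bm C + \B(\bm U)$ translate into a uniform bound on $\|\X(\bm U)\| = \mu \bigl\|(\bm C + \B(\bm U))^{-1}\bigr\|$ over $\bm U \in \mathcal{L}$, proving the corollary. The only mildly delicate step is the coercivity argument for $\phi$; everything else is direct bookkeeping from the proof of Lemma~\ref{LEMM:bounded}.
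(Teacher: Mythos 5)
Your proof is correct and is essentially the argument the paper intends: the corollary is stated without an explicit proof and is deferred to Lemma~\ref{LEMM:bounded} (and, for the constants $\eta_{\bm X},\eta_{\bm X^{-1}}$, to Remark~2 of \cite{nakagaki2020dual}), and your combination of the level-set inequality, the strict primal feasibility of $\widehat{\bm X}$, and the coercivity of $t \mapsto \lambda_{\min}(\widehat{\bm X})\,t - \mu\log t$ at both ends of $(0,\infty)$ is precisely how those eigenvalue bounds are obtained there. You in fact supply a detail the paper glosses over: the boundedness of $\bm C + \B(\bm U)$ asserted in Lemma~\ref{LEMM:bounded} only controls $\bm X(\bm U)^{-1}$, and it is your lower eigenvalue bound, coming from the $\log\det$ barrier blowing up as an eigenvalue tends to $0^{+}$, that actually bounds $\bm X(\bm U)$ itself.
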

We define the lower bound and upper bound of $\|\X(\bm U)\|$ by $\beta_{\min}$ and $\beta_{\max}$, respectively. In other words, $\beta_{\min}\bm I \preceq \X(\bm U) \preceq \beta_{\max} \bm I$. Letting $\bm X^k : = \bm X(\bm U^k)$, we consider the boundedness of components.
\begin{lemma}\label{LEMM:bound}
	There exist positive constants $\eta_{\bm X}, \eta_{\bm X^{-1}}, \eta_{ \Delta\bm y},
\eta_{\Delta \bm W}$ and $\eta_{\Delta \bm S}$ such that 
 $\|\X^k\| \le \eta_{\bm X}$,\\ $\|(\X^k)^{-1}\| \le \eta_{\bm X^{-1}}$, $\| \Delta \y^k \| \le \eta_{\Delta \bm y}$, $\| \Delta \bm W^k\| \le \eta_{\Delta \bm W}$, and $ \| \Delta \bm S^k \| \le \eta_{\Delta \bm S}$ hold for all $k$. 
\end{lemma}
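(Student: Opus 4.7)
The plan is to reduce every quantity in the conclusion to the already-established boundedness of $\{\bm U^k\}$ (Lemma~\ref{LEMM:bounded}) and of $\{\bm X(\bm U)\mid \bm U\in\mathcal{L}\}$ (Corollary~\ref{CORO:bound-on-X}), together with the explicit formula
\begin{equation*}
\nabla g(\bm U) = \Big(\bm b - \mathcal{A}\left(\bm X(\bm U)\right),\, \tfrac{1}{2}\bm X(\bm U),\, \bm X(\bm U)\Big)
\end{equation*}
noted in Remark~\ref{REMA:X-U}. Since $\bm U^k \in \mathcal{L}$ by Lemma~\ref{LEMM:bounded}, the existence of $\eta_{\bm X}$ is immediate from Corollary~\ref{CORO:bound-on-X}: one can take $\eta_{\bm X} := \sqrt{n}\,\beta_{\max}$ using $\bm X^k \preceq \beta_{\max}\bm I$. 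For the inverse, the relation $\beta_{\min}\bm I \preceq \bm X^k$ implies $(\bm X^k)^{-1} \preceq \beta_{\min}^{-1}\bm I$, so one may set $\eta_{\bm X^{-1}} := \sqrt{n}/\beta_{\min}$.

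For the increment terms, first recall from the discussion after Algorithm~\ref{alg} that the projection onto $\mathcal{M}$ decouples as $P_{\mathcal{M}}(\bm y,\bm W,\bm S)=(\bm y,P_{\mathcal{W}}(\bm W),P_{\mathcal{S}}(\bm S))$. Plugging the formula for $\nabla g(\bm U^k)$ into the definition of $\bm D^k$ in Step~2 yields
\begin{align*}
\Delta \bm y^k &= \alpha_k\bigl(\bm b - \mathcal{A}(\bm X^k)\bigr),\\
\Delta \bm W^k &= P_{\mathcal{W}}\bigl(\bm W^k + \tfrac{\alpha_k}{2}\bm X^k\bigr) - \bm W^k,\\
\Delta \bm S^k &= P_{\mathcal{S}}\bigl(\bm S^k + \alpha_k \bm X^k\bigr) - \bm S^k.
\end{align*}
From $\alpha_k \le \alpha_{\max}$ together with the bound on $\bm X^k$ and the continuity of $\mathcal{A}$, the first display gives $\|\Delta \bm y^k\| \le \alpha_{\max}(\|\bm b\| + \|\mathcal{A}\|\,\eta_{\bm X})$.

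For $\Delta \bm W^k$ and $\Delta \bm S^k$, the key observation is that both $\mathcal{W}$ and $\mathcal{S}$ are bounded: $\mathcal{W}$ is bounded by its definition, and $\mathcal{S}$ is the image under the linear map $\mathcal{Q}^\top$ of the bounded set $\{\bm Z\in\mathbb{S}^{n(n-1)/2}:\|\bm Z\|_\infty\le \lambda\}$, so $\sup_{\bm S\in\mathcal{S}}\|\bm S\|$ is finite. Since $\bm W^k\in\mathcal{W}$, $P_{\mathcal{W}}(\cdot)\in\mathcal{W}$, $\bm S^k\in\mathcal{S}$, and $P_{\mathcal{S}}(\cdot)\in\mathcal{S}$, a triangle inequality yields
\begin{equation*}
\|\Delta \bm W^k\| \le 2\sup_{\bm W\in\mathcal{W}}\|\bm W\|=:\eta_{\Delta \bm W},\qquad \|\Delta \bm S^k\| \le 2\sup_{\bm S\in\mathcal{S}}\|\bm S\|=:\eta_{\Delta \bm S},
\end{equation*}
which finishes the argument. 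There is no real obstacle here: every quantity is built from either the bounded iterate $\bm U^k$, the bounded matrix $\bm X^k$, or a projection onto a bounded set, so the lemma is essentially a bookkeeping consequence of Lemma~\ref{LEMM:bounded} and Corollary~\ref{CORO:bound-on-X}. The only mildly subtle point to emphasize is that the boundedness of $\mathcal{S}$ relies on $\mathcal{Q}^\top$ being a linear operator on a finite-dimensional space, applied to the bounded $\|\cdot\|_\infty$-ball defining $\mathcal{S}$ in \eqref{S-definition}.
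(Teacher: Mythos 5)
Your proof is correct, and for $\eta_{\bm X}$, $\eta_{\bm X^{-1}}$ and $\eta_{\Delta\bm y}$ it matches the paper's argument (boundedness of $\{\bm X^k\}$ via Corollary~\ref{CORO:bound-on-X} and the explicit form of $\nabla_{\bm y}g$). The one place you diverge is in bounding $\Delta\bm W^k$ and $\Delta\bm S^k$: you invoke the boundedness of the sets $\mathcal{W}$ and $\mathcal{S}$ themselves and apply the triangle inequality, obtaining $2\sup_{\bm W\in\mathcal{W}}\|\bm W\|$ and $2\sup_{\bm S\in\mathcal{S}}\|\bm S\|$, whereas the paper uses the nonexpansiveness of the projection together with $P_{\mathcal{W}}(\bm W^k)=\bm W^k$ and $P_{\mathcal{S}}(\bm S^k)=\bm S^k$ to get $\|\Delta\bm W^k\|\le\|\tfrac{\alpha_k}{2}\bm X^k\|\le\tfrac{\alpha_{\max}}{2}\eta_{\bm X}$ and $\|\Delta\bm S^k\|\le\alpha_{\max}\eta_{\bm X}$. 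Both are valid, and since the downstream use (Lemma~\ref{LEMM:steplength-bound}) only needs some finite constants, your version suffices; the paper's nonexpansiveness argument is slightly more uniform in that it parallels the $\Delta\bm y^k$ bound, does not require the constraint sets to be bounded, and produces constants proportional to $\alpha_{\max}\eta_{\bm X}$, while yours leans on the same boundedness facts about $\mathcal{W}$ and $\mathcal{S}$ already established in Lemma~\ref{LEMM:bounded} (with the correct observation that $\mathcal{S}$ is bounded as the linear image of an $\ell_\infty$-ball in finite dimensions).
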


\begin{proof}
	By following Remark~2 in \cite{nakagaki2020dual}, we obtain $\eta_{\X}$ and $\eta_{\X^{-1}}$.
	In addition, it holds that $\| \Delta \y^k \| \leq \alpha_k (\| \b\| + \|\AC\| \cdot \|\X^k\|) \leq \alpha_{\max} (\| \b \| + \eta_{\X} \| \AC \|) =: \eta_{\Delta \y}$.
   From the inequalities $\left\| P_{\W}\left(\bm W^k + \frac{\alpha_k}{2} \bm X^k\right) - \bm W^k \right\| \leq \| \frac{\alpha_k}{2} \bm X^k\| $ and $\left\| P_{\SC}\left(\bm S^k + \alpha_k \X^k\right) - \bm S^k \right\| \leq \| \alpha_k \X^k\|$, we know 
 $\| \Delta \bm W^k \| \leq \left\|\frac{\alpha_k}{2} \X^k\right\| \leq \frac{\alpha_{\max}}{2} \eta_{\X} =: \eta_{\Delta \bm W}$ and 
$\| \Delta \bm S^k \| \leq \left\|{\alpha_k} \X^k\right\| \leq \alpha_{\max} \eta_{\X} =: \eta_{\Delta \bm S}$.
\end{proof}

If $\|\bm D^k\| = 0$, 
we can say that $\bm U^k$ is 
an optimal solution of \eqref{dual} due to Theorem~\ref{THEO:optimality}, and Algorithm~\ref{alg} also terminates at Step 1. 
Therefore, we can assume that $\|\bm D^k\| > 0$ 
without loss of generality during the iterations of Algorithm~\ref{alg}.

The termination status of Algorithm~\ref{alg} can be 
divided into two cases, (i) the step length $\nu_k$ converges to zero before reaching 
divided into two cases, (i) the step length $\nu_k$ converges to zero before reaching 
an optimal solution 
(ii) Algorithm~\ref{alg} will stop at the optimal value, or generate a sequence that converges to the optimal value.

The case (i) will be denied by 
Lemma~\ref{LEMM:steplength-bound} below.
The proof of this lemma is similar to Lemma 7 in \cite{nakagaki2020dual}, but we need different constants like $\eta_{\Delta \bm S}$.
Therefore, the only possibility is the case (ii), and the 
convergence to the optimal value will be guaranteed 
in Theorem~\ref{theo:dspg}.
The proof of this lemma is similar to Lemma 7 in \cite{nakagaki2020dual}, but we need different constants like $\eta_{\Delta \bm S}$.
Therefore, the only possibility is the case (ii), and the 
convergence to the optimal value will be guaranteed 
in Theorem~\ref{theo:dspg}.

\begin{lemma}[Lower bound of step length]\label{LEMM:steplength-bound}
The step length $\nu_k$ of Algorithm~\ref{alg} 
has a positive lower bound.
\end{lemma}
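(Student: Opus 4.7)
The plan is to directly control the quantity $\theta$ appearing in the definition of $\nu_k$. When $\theta \ge 0$ we have $\nu_k = 1$, so the only case to analyze is $\theta < 0$, where $\nu_k = \min\{1, -\tau/\theta\}$. It therefore suffices to establish a uniform upper bound on $|\theta|$, independent of $k$.

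First, I would exploit the Cholesky identity $\bm L_k \bm L_k^\top = \bm C + \B(\bm U^k)$ to rewrite
\begin{equation*}
\bm L_k^{-1}(\bm L_k^\top)^{-1} = (\bm C + \B(\bm U^k))^{-1} = \mu^{-1}\bm X(\bm U^k) = \mu^{-1}\bm X^k.
\end{equation*}
By Corollary~\ref{CORO:bound-on-X} we have $\bm X^k \preceq \beta_{\max}\bm I$, so the spectral norm of $\bm L_k^{-1}(\bm L_k^\top)^{-1}$ is uniformly bounded by $\beta_{\max}/\mu$, and consequently so are $\|\bm L_k^{-1}\|$ and $\|(\bm L_k^\top)^{-1}\|$.

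Next, I would bound $\|\B(\bm D^k)\|$ by unpacking the definition of $\B$ and invoking Lemma~\ref{LEMM:bound}:
\begin{equation*}
\|\B(\bm D^k)\| \le \|\A^\top\|\cdot\|\Delta\bm y^k\| + \tfrac{1}{2}\|\Delta\bm W^k\| + \|\Delta\bm S^k\| \le \|\A^\top\|\eta_{\Delta\bm y} + \tfrac{1}{2}\eta_{\Delta\bm W} + \eta_{\Delta\bm S}.
\end{equation*}
The constant $\eta_{\Delta\bm S}$ is precisely the ingredient beyond what appears in Lemma~7 of \cite{nakagaki2020dual}, and it is furnished by the extended version of Lemma~\ref{LEMM:bound} that accounts for the new variable $\bm S$.

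Finally, since $\B(\bm D^k)$ is symmetric, so is the congruence $\bm L_k^{-1}\B(\bm D^k)(\bm L_k^\top)^{-1}$; hence $|\theta|$ is bounded by its spectral norm, which in turn is bounded by the product of the spectral norms of the three factors. Combining the estimates above yields a uniform constant $\bar{\theta} > 0$ with $|\theta| \le \bar{\theta}$, whence $\nu_k \ge \min\{1, \tau/\bar{\theta}\} > 0$. The part requiring the most care is making sure the boundedness results for the new variable $\bm S$ are in hand before starting, but Lemma~\ref{LEMM:bound} has already supplied the necessary constant $\eta_{\Delta\bm S}$, so the bookkeeping goes through just as in \cite{nakagaki2020dual}.
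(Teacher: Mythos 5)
Your argument for the stated claim is correct and rests on the same two ingredients as the paper's: the uniform bound $\bm C + \B(\bm U^k) = \mu(\bm X^k)^{-1} \succeq \frac{\mu}{\beta_{\max}}\bm I$ from Corollary~\ref{CORO:bound-on-X}, and the uniform bound $\|\B(\bm D^k)\| \le \eta_{\B(\bm D)}$ assembled from the constants of Lemma~\ref{LEMM:bound}. The paper packages this as ``$-1/\theta$ is the largest $\nu$ with $\bm C + \B(\bm U^k) + \nu\B(\bm D^k)\succeq \bm 0$, and every $\nu \le \mu/(\beta_{\max}\eta_{\B(\bm D)})$ works,'' whereas you bound $|\theta|$ directly by the spectral norm of the congruence; these are equivalent and yield the identical constant $\nu_{\min} = \min\{1,\tau\mu/(\beta_{\max}\eta_{\B(\bm D)})\}$. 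One small slip: $(\bm L_k\bm L_k^\top)^{-1} = (\bm L_k^\top)^{-1}\bm L_k^{-1}$, not $\bm L_k^{-1}(\bm L_k^\top)^{-1}$; this is harmless because the two products share the same spectrum, so $\|\bm L_k^{-1}\|_2^2 = \|(\bm C+\B(\bm U^k))^{-1}\|_2 \le \beta_{\max}/\mu$ still holds. Be aware, though, that the paper's proof of this lemma does substantially more than the literal statement: it also derives a Lipschitz bound for $\nabla g$ along $\bm D^k$ and uses it to lower-bound the backtracking factor $\sigma_k$, producing the combined bound $(\sigma\nu)_{\min}$ on $\sigma_k\nu_k$ that is what Lemma~\ref{lemm:liminf-delta} and the convergence theorem actually consume. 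Your proof covers only the $\nu_k$ half, so the $\sigma_k$ analysis would still need to be supplied elsewhere.
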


\begin{proof}
We consider that
\begin{align}
\|\B(\bm D^k)\| & = \left\|-\A^\top (\Delta \bm y) + \frac{\Delta \bm W}{2} + \Delta \bm S\right\| \\
& \leq \|\A^\top\|\|\Delta \bm y^k\| + \frac{\| \Delta \bm W^k \|}{2} + \| \Delta \bm S^k \| \\
& \leq \max \left\{ \|\A^\top\|, 1 \right\} \left(\|\Delta \bm y^k\| + \| \Delta \bm W^k \| + \|\Delta \bm S^k \|\right) \label{eq:boundBD-1} \\
& \leq \sqrt{3}\max \left\{ \|\A^\top\| , 1 \right\} \|\bm D^k\|.\label{eq:boundBD}
\end{align} 
From Lemma~\ref{LEMM:bound}, we apply 
$\mu(\X^k)^{-1} \succeq \beta_{\min} \mu \I, \|\Delta \y^k\| \leq \eta_{\Delta \y}, \|\Delta \bm W^k\| \leq \eta_{\Delta \bm W}, \|\Delta \bm S^k \| \leq \eta_{\Delta \bm S}$, and we can obtain $\|\B(\bm D^k)\|$ can be bounded;
$\|\B(\bm D^k)\| \le \eta_{\B(\D)} := \max \left\{ \|\A^\top\| , 1 \right\}(\eta_{\Delta \y} + \eta_{\Delta \bm W} + \eta_{\Delta \bm S})$ from 
\eqref{eq:boundBD-1}
and $\|\B(\bm D^k)\| \le 
\sqrt{3}\max \left\{ \|\A^\top\|, 1 \right\} \|\bm D^k\|$
from \eqref{eq:boundBD}.

Our first goal is to show that $\nu_k$ from Step 2 of Algorithm~\ref{alg} has a lower bound. If $\theta \geq 0$, $\nu_k = 1$ is the fixed value, so we consider only the case that $\theta < 0$. Since $\theta$ is the minimum eigenvalue of $\bm L_k^{-1}\B(\bm D^k) \left(\bm  L_k^\top\right)^{-1}$, $\theta$ is also  the maximum value such that $\B(\bm D^k) \succeq \theta (\C + \B(\bm U^k))$. This implies that $\nu = -\frac{1}{\theta}$ is the maximum value that satisfies $\C + \B(\bm U^k) + \nu\B(\bm D^k) \succeq \O$. 

Next, we consider the bound of positive $\nu$ that satisfies $\C + \B(\bm U^k) + \nu\B(\bm D^k) \succeq \O$. From the upper bound $\C + \B(\bm U^k) = \mu (\X^k)^{-1} \succeq \frac{\mu}{\beta_{\max}} \I$  and $\|\B(\bm D^k)\| \leq \eta_{\bm \B(\bm D)}$, we have
\begin{align}
\C + \B(\bm U^k) + \nu \B(\bm D^k) \succeq \frac{\mu}{\beta_{\max}} \I - \nu \|\B(\bm D^k)\| \I \succeq \left(\frac{\mu}{\beta_{\max}} - \nu \eta_{\B (\bm D)}\right) \I.
\end{align} 
Therefore, for any 
$\nu \in \left[0 , \frac{\mu}{\beta_{\max} \eta_{\B(\bm D)}}\right]$,
$\C + \B(\bm U^k) + \nu \B(\bm D^k) \succeq 0$ is satisfied. 
This implies that  
$- \frac{1}{\theta} \geq \frac{\mu}{\beta_{\max}} \eta_{\B(\bm D)}$ 
when $\theta < 0$. Now we can obtain a lower bound of $\nu_k$ by the following inequality,
\begin{align}
 \nu_k &= \min\left\{1, -\frac{\tau}{\theta}\right\} \geq \min\left\{1, \frac{\tau \mu}{\beta_{\max}} \eta_{\B(\bm D)}\right\} =: v_{\min}.  
\end{align}

Next, we will show that for any $\nu \in (0,\nu_k)$, $\C + \B(\bm U^k) + \nu \B(\bm D^k)$ is bounded below by $\C + \B(\bm U^k)$. If $\theta \geq 0$, $\B(\D^k) \succeq 0$ and it is obvious that $\C + \B(\bm U^k) + \nu \B(\bm D^k) \succeq \C + \B(\bm U^k) \succeq (1 - \tau) (\C + \B(\bm U^k))$. If $\theta < 0$, from the definition $\nu_k = \min\{1, -\tau/\theta\}$, $\C + \B(\bm U^k) + \nu \B(\bm D^k) \succeq \C + \B(\bm U^k) + \nu \theta (\C + \B(\bm U^k)) \succeq (1 - \tau)(\C + \B(\bm U^k))$. We use the lower bound $\C + \B(\bm U^k) = \mu (\X(\U^k))^{-1} \succeq \frac{\mu}{\beta_{\max}} I$ to imply $\C + \B(\bm U^k) + \nu \B(\bm D^k) \succeq (1 - \tau)\frac{\mu}{\beta_{\max}} \bm I$.

From Lemma 6 (iii) in \cite{nakagaki2020dual}, if $\overline{\bm X} \succeq \beta \bm I, \overline{\bm Y} \succeq \beta \bm I$ then $|| \overline{\bm Y} - \overline{\bm X} || \geq \beta^2 || \overline{\bm Y}^{-1} - \overline{\bm X}^{-1}||$. Therefore, from $X(\bm U^k + \nu \bm D^k) = \mu(\C + \B(\bm U^k) + \nu \B(\bm D^k))^{-1}$ , we obtain
\begin{align}
    & \ \left\|\left(\bm X(\bm U^k + \nu \bm D^k) - \bm X(\bm U^k) \right)\right\|  = \mu \|(\C + \B(\bm U^k) + \nu \B(\bm D^k))^{-1} - (\C + \B(\bm U^k))^{-1}\|\\
		\leq & \ \frac{\mu \|\nu\B(\bm D^k)\|}{((1-\tau)\frac{\mu}{\beta_{\max}})^2} 
     = \frac{\nu \|\B(\bm D^k)\|}{\mu(\frac{1-\tau}{\beta_{\max}})^2}.
\end{align}

We are now in a position to show that $\nabla g$ is a Lipschitz continuity for the direction $\bm D^k$. 

\begin{align}
    & \ \|\nabla g(\bm U^k + \nu \bm D^k) - \nabla g(\bm U^k)\|\\
    = & \ \left\|\left(-\A(\bm X(\bm U^k + \nu \bm D^k) - \bm X(\bm U^k)), \frac{1}{2} \bm X(\bm U^k + \nu \bm D^k) - \frac{1}{2}\bm X(\bm U^k), \bm X(\bm U^k + \nu \bm D^k) - \bm X(\bm U^k)\right)\right\|\\
    \leq & \ \sqrt{\|\A\|^2 + \frac{5}{4}}\|X(\bm U^k + \nu \bm D^k) - \bm X(\bm U^k)\| \\
    \leq & \ \frac{\nu \sqrt{\|\A\|^2 + \frac{5}{4}}}{\mu(\frac{1-\tau}{\beta_{\max}})^2} \|\B(\bm D^k)\|\\
    \leq & \ \frac{\sqrt{\|\A\|^2 + \frac{5}{4}}\sqrt{3}\max \left\{ \|\A\|, 1\right\}}{\mu(\frac{1-\tau}{\beta_{\max}})^2} \nu  \|\bm D^k\|,
\end{align}
where the last inequality came from \eqref{eq:boundBD}. We can conclude that $\|\nabla g(\bm U^k + \nu \bm D^k) - \nabla g(\bm U^k)\|$ is bounded by $\|\nu \bm D^k\|$, which shows the Lipschitz continuity with the Lipschitz constant $L := \frac{\sqrt{\|\A\|^2 + \frac{5}{4}}\sqrt{3}\max \left\{ \|\A\|, 1 \right\}}{\mu(\frac{1-\tau}{\beta_{\max}})^2}$.

In the last part, We consider the termination condition at Step 2 of Algorithm~\ref{alg}. 
When it terminates at the first iteration ($\sigma_k = 1$), 
it holds that $\sigma_k \nu_k = \nu_k \geq \nu_{\min}$.  
If it terminates at $\sigma_k = \beta^j$, then the condition is not satisfied at $\sigma_k = \beta^{j-1}$, thus,
\begin{align}
\ g(\bm U^k + \beta^{j-1}\nu_k \bm D^k) < & \ \min \limits_{[k-M]_{+}\leq j \leq k} g(\bm U^j) + \gamma \beta^{j-1}\nu_k \langle \nabla g(\bm U^k) , \bm D^k \rangle \\
 \leq & \ g(\bm U^k) + \gamma \beta^{j-1}\nu_k \langle \nabla g(\bm U^k) , \bm D^k \rangle.
\label{eq:lowerg}
\end{align}
From Taylor's expansion, we obtain
\begin{align}
& \ g(\bm U^k + \beta^{j-1}\nu_k \bm D^k)^k - g(\bm U^k) \\
=& \ \langle \beta^{j-1}\nu_k \nabla g(\bm U^k) , \bm D^k \rangle
+ \int_0^{\beta^{j-1}\nu_k} \langle \nabla g(\bm U^k + \lambda \bm D^k )- g(\bm U^k) , \bm D^k \rangle d \lambda. \label{eq:middleg}
\end{align}
Since $\nabla g$ is a Lipschitz continuity for the direction $\bm D^k$,
\begin{align}
     \langle \nabla g(\bm U^k + \lambda \bm D^k )- g(\bm U^k) , \bm D^k \rangle \geq - \| \nabla g(\bm U^k + \lambda \bm D^k )- g(\bm U^k)\|\|\bm D^k\| \geq -L\lambda\|\bm D^k\|^2.
\end{align}
Thus,
\begin{align}
 \int_0^{\beta^{j-1}\nu_k} \langle \nabla g(\bm U^k + \lambda \bm D^k )- g(\bm U^k) , \bm D^k \rangle d \lambda 
\ \geq   - \int_0^{\beta^{j-1}\nu_k} L\lambda\|\bm D^k\|^2 d \lambda 
\ =   \frac{-L(\beta^{j-1}\nu_k)^2}{2} \|\bm D^k\|^2.\label{eq:upperg}
\end{align}

Combining \eqref{eq:lowerg} , \eqref{eq:middleg}, and \eqref{eq:upperg} implies
\[
\beta^{j-1}\nu_k \geq \frac{2(1 - \gamma)}{L}\frac{\langle \nabla g(\bm U^k) , \bm D^k \rangle}{\|\bm D^k\|^2}.
\]
In Lemma \ref{LEMM:bounded}, we obtained $\langle \nabla g(\bm U^k) , \bm D^k \rangle \geq \frac{\|\bm D^k\|^2}{\alpha_k} $, therefore,
\[
\frac{\langle \nabla g(\bm U^k) , \bm D^k \rangle}{\|\bm D^k\|^2} \geq \frac{1}{\alpha_k} \geq \frac{1}{\alpha_{\max}}.\
\]
Finally, since the step length is $\sigma_k \nu_k = \beta^{j}\nu_k$, we can conclude that $\beta^{j}\nu_k \geq \frac{2 \beta (1 - \gamma)}{L \alpha_{\max}}$, which means that the step length $\sigma_k \nu_k$ has a lower bound. This completes the proof.
\end{proof}

Let $(\sigma\nu)_{\min} := \min\{\nu_{\min}, \frac{2 \beta (1 - \gamma)}{L \alpha_{\max}}\}$ denote the positive lower bound of the step length $(\sigma_k \nu_k)$. In the next lemma, we will use this result to show that a subsequence of the search direction converges to $0$. 

\begin{lemma}\label{lemm:liminf-delta}

Algorithm~\ref{alg} with $\epsilon = 0$ stops after reaching the optimal value $g^*$, or
\[
\liminf_{k\to \infty}\|\bm R^k\| = 0.
\]
\end{lemma}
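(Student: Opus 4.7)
The plan is to follow the standard non-monotone line search analysis in the style of Grippo--Lampariello--Lucidi (adapted to maximization) and the argument in Nakagaki et al., exploiting the uniform lower bound on the step length from Lemma~\ref{LEMM:steplength-bound}. The dichotomy is clear: if $\bm R^k = \bm 0$ for some $k$, then by Theorem~\ref{THEO:optimality} the iterate $\bm U^k$ is optimal and there is nothing to prove. Hence we may assume the algorithm generates an infinite sequence with $\bm D^k\neq\bm 0$ and, by the argument inside the proof of Lemma~\ref{LEMM:bounded}, $\langle\nabla g(\bm U^k),\bm D^k\rangle\geq\|\bm D^k\|^2/\alpha_{\max}>0$ for all $k$.

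For each $k\geq M-1$, I would pick an index $\ell(k)\in\{k-M+1,\ldots,k\}$ attaining $g(\bm U^{\ell(k)})=\min_{[k-M+1]_+\leq j\leq k}g(\bm U^j)$. The non-monotone Armijo condition of Step~2 gives $g(\bm U^{k+1})\geq g(\bm U^{\ell(k)})+\gamma\sigma_k\nu_k\langle\nabla g(\bm U^k),\bm D^k\rangle\geq g(\bm U^{\ell(k)})$. A short comparison of the windows $\{k-M+1,\ldots,k\}$ and $\{k-M+2,\ldots,k+1\}$ shows that $g(\bm U^{\ell(k+1)})\geq g(\bm U^{\ell(k)})$, so $\{g(\bm U^{\ell(k)})\}$ is non-decreasing. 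By weak duality together with Assumption~\ref{assp}(ii), $g$ is bounded above by $f(\widehat{\bm X})<\infty$, hence $\{g(\bm U^{\ell(k)})\}$ converges. Applying the acceptance inequality with $k$ replaced by $\ell(k)-1$ and telescoping yields
\[
\gamma\,\sigma_{\ell(k)-1}\nu_{\ell(k)-1}\,\langle\nabla g(\bm U^{\ell(k)-1}),\bm D^{\ell(k)-1}\rangle\;\longrightarrow\;0.
\]
Lemma~\ref{LEMM:steplength-bound} provides $\sigma_{\ell(k)-1}\nu_{\ell(k)-1}\geq(\sigma\nu)_{\min}>0$, so the inner product tends to zero, and combined with $\langle\nabla g(\bm U^k),\bm D^k\rangle\geq\|\bm D^k\|^2/\alpha_{\max}$ this gives $\|\bm D^{\ell(k)-1}\|\to 0$ along the subsequence $\{\ell(k)-1\}$.

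The remaining step is to transfer this decay from $\bm D^k$ to $\bm R^k$. The classical Gafni--Bertsekas monotonicity properties of the projected-gradient map state that for the convex set $\M$ the function $\alpha\mapsto\|P_{\M}(\bm U+\alpha\nabla g(\bm U))-\bm U\|$ is non-decreasing while $\alpha\mapsto\|P_{\M}(\bm U+\alpha\nabla g(\bm U))-\bm U\|/\alpha$ is non-increasing. Since $\alpha_k\in[\alpha_{\min},\alpha_{\max}]$, applying these two properties to $\alpha=1$ and $\alpha=\alpha_k$ yields a two-sided bound of the form $c_1\|\bm R^k\|\leq\|\bm D^k\|\leq c_2\|\bm R^k\|$ with constants $c_1,c_2>0$ depending only on $\alpha_{\min},\alpha_{\max}$. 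Consequently $\|\bm R^{\ell(k)-1}\|\to 0$, which proves $\liminf_{k\to\infty}\|\bm R^k\|=0$. The main obstacle I anticipate is not the monotone-sequence bookkeeping (standard) nor Lipschitz arguments, but carefully invoking the Gafni--Bertsekas projection inequalities in the composite space $\mathbb{R}^m\times\mathbb{S}^n\times\mathbb{S}^n$ with the inner product defined in Section~\ref{sec:notation}; this just needs convexity of $\M=\mathbb{R}^m\times\W\times\S$, which holds since $\S$ is the image of a norm ball under a linear map.
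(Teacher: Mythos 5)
Your proposal is correct and follows essentially the same route as the paper: both rest on the step-length lower bound of Lemma~\ref{LEMM:steplength-bound}, the inequality $\langle\nabla g(\bm U^k),\bm D^k\rangle\ge\|\bm D^k\|^2/\alpha_{\max}$, the boundedness of $g$ from above, and the projection monotonicity properties to pass from $\bm D^k$ to $\bm R^k$. The only difference is presentational — you run the standard GLL window-minimum argument directly (showing $g(\bm U^{\ell(k)})$ converges so the Armijo increments vanish along $\{\ell(k)-1\}$), whereas the paper phrases the same bookkeeping as a contradiction in which $g^{\min}_{k+PM}\ge g^{\min}_k+P\hat\delta$ would force $g$ past its optimal value.
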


\begin{proof}
Firstly, we will show that $\|\bm R^k\|$ is bounded by $\|\bm D^k\|$. From the property of the projection in ~\cite{HAGER08}, we know that $\|P_{\M}\left(\bm U^k + \alpha \nabla g(\bm U^k)\right)  - \bm U^k\|$ is a non-decreasing function and $\|P_{\M}\left(\bm U^k + \alpha \nabla g(\bm U^k)\right)  - \bm U^k\| / \alpha$ is a non-increasing function for $\alpha > 0$. This implies $\|\bm R^k\| \leq \|\bm D^k\|$ for $\alpha \leq 1$, and $\|\bm R^k\| \leq \alpha \|\bm D^k\|$ for $\alpha > 1$. We can conclude that $\|\bm R^k\| \leq \max\{1,\alpha_{\max}\}\|\bm D^k\|$.
Therefore, it is enough to show $\liminf_{k\to \infty}\|\D^k\| = 0$ instead. 
Therefore, it is enough to show $\liminf_{k\to \infty}\|\D^k\| = 0$ instead. 

Suppose that there exists $\delta >0$ and $k_0 > M$ such that $\|\bm D^k\| > \delta$ for any $k > k_0$ and we derive a contradiction. From the proof of Lemma~\ref{LEMM:bounded}, we obtain $\left\langle\nabla g(\bm U^k),\,\bm D^k\right\rangle\ge \frac{\|\bm D^k\|^2}{\alpha_k} \ge \frac{\|\bm D^k\|^2}{\alpha_{\max}}$. Combining with the lower bound of step length in Lemma~\ref{LEMM:steplength-bound}, we can
show that  $\gamma\sigma_k \nu_k\langle\nabla g(\bm U^k),\,\bm D^k\rangle$ has a lower bound;
$\gamma\sigma_k \nu_k\langle\nabla g(\bm U^k),\,\bm D^k\rangle \geq \gamma \sigma_k \nu_k \frac{\|\bm D^k\|^2}{\alpha_{\max}} \ge \gamma (\sigma \nu)_{\min} \frac{\delta^2}{\alpha_{\max}} =: \hat{\delta}$. 
Let $g_k^{\min} := \min_{k - M \leq j \leq k} g(\bm U^j)$. Following the condition in Step 2 of Algorithm~\ref{alg}, we obtain
\begin{equation}
    g(\bm U^{k+1}) \geq g_k^{\min} + \hat{\delta}.
\end{equation}
This follows by
\begin{align}
g(\bm U^{k+2})  \geq \min_{k - M + 1 \leq j \leq k + 1} g(\bm U^j) + \hat{\delta} 
 \geq \min\{g(\bm U^{k+1}),g_k^{\min}\} + \hat{\delta} \geq g_k^{\min} + \hat{\delta}.
\end{align}
Using the induction, we can derive that for $j = 1,2,\dots, M$, 
\begin{equation}
    g(\bm U^{k+j}) \geq g_k^{\min} + \hat{\delta}.
\end{equation}
This leads to
\begin{equation}
    g_{k+M}^{\min} =  \min_{k + 1 \leq j \leq k + M} g(\bm U^j)\geq g_k^{\min} + \hat{\delta}.
\end{equation}
The above statement is true for all $k > k_0$, so repeating the inequality $P$ times leads to 
\begin{equation}
    g_{k+ PM}^{\min} \geq g_k^{\min} + P\hat{\delta}.
\end{equation}
On the other hand, 
the level set $\mathcal{L}$ is bounded and closed, therefore, the dual problem has a finite optimal value. Consequently, the sequence $\{g(\bm U^k)\}$ should be bounded by the optimal value $g^*$ from the above, and we have a contradiction. This completes the proof.
\end{proof}

To show the convergence of the objective value ($\liminf_{k \to \infty} |g(\bm U^k) - g^*| = 0$) in
Lemma~\ref{lemm:liminf-obj} below, we need more upper bounds. 
\begin{lemma}\label{lemm:upperbound}
$|\sum_{i<j}\rho |X_{ij}| - \frac{\bm W^k}{2} \bullet \bm X^k|$ is bounded by $\|\Delta \bm W^k_{(1)}\|$, and $|\sum_{i<j,s<t}\lambda |X^k_{ij} - X^k_{st}| - \bm S^k \bullet \bm X^k|$ is bounded by $\|\Delta \bm S^k_{(1)}\|$. 
\end{lemma}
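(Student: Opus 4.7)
The plan is to express each regularizer appearing on the left of the lemma as the support function of the corresponding dual feasible set $\W$ or $\S$, and then exploit the variational-inequality characterization of the projection defining $\Delta \bm W^k_{(1)}$ (resp.\ $\Delta \bm S^k_{(1)}$). This turns the quantities inside the absolute values into nonnegative ``duality gaps'' that are controlled by the projected-gradient residual, in the spirit of Lemma~8 of~\cite{nakagaki2020dual}.

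First, I would identify the two dual representations
\begin{equation*}
\rho \sum_{i<j} |X^k_{ij}| = \max_{\bm W \in \W} \tfrac{1}{2} \bm W \bullet \bm X^k,
\quad
\lambda \sum_{i<j,\, s<t} |X^k_{ij} - X^k_{st}| = \lambda \|\Q(\bm X^k)\|_1 = \max_{\bm S \in \S} \bm S \bullet \bm X^k,
\end{equation*}
where the second equality uses $\max_{\|\bm Z\|_\infty \le \lambda} \bm Z \bullet \Q(\bm X^k) = \max_{\|\bm Z\|_\infty \le \lambda} \Q^\top(\bm Z) \bullet \bm X^k$ together with the definition \eqref{S-definition} of $\S$. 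By Lemma~\ref{LEMM:bounded} we have $\bm W^k \in \W$ and $\bm S^k \in \S$, so both differences inside the absolute values are nonnegative and the absolute values can be dropped.

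Next, for the $\W$-part I would set $\widehat{\bm W}^k := P_{\W}(\bm W^k + \tfrac{1}{2}\bm X^k) = \bm W^k + \Delta \bm W^k_{(1)}$, recalling from Remark~\ref{REMA:X-U} that $\nabla_{\bm W} g(\bm U^k) = \tfrac{1}{2}\bm X^k$. The projection inequality gives, for every $\bm W \in \W$,
\begin{equation*}
\bigl\langle \widehat{\bm W}^k - \bm W^k - \tfrac{1}{2}\bm X^k,\; \bm W - \widehat{\bm W}^k \bigr\rangle \ge 0,
\end{equation*}
i.e. $\tfrac{1}{2}\bm X^k \bullet (\bm W - \widehat{\bm W}^k) \le \Delta \bm W^k_{(1)} \bullet (\bm W - \widehat{\bm W}^k)$. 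Substituting the maximizer $\bm W^\ast$ of $\tfrac{1}{2}\bm W \bullet \bm X^k$ over $\W$ and decomposing $\tfrac{1}{2}\bm X^k \bullet \widehat{\bm W}^k = \tfrac{1}{2}\bm X^k \bullet \bm W^k + \tfrac{1}{2}\bm X^k \bullet \Delta \bm W^k_{(1)}$ yields
\begin{equation*}
\rho\sum_{i<j}|X^k_{ij}| - \tfrac{1}{2}\bm W^k \bullet \bm X^k \;\le\; \Delta \bm W^k_{(1)} \bullet (\bm W^\ast - \widehat{\bm W}^k) + \tfrac{1}{2}\bm X^k \bullet \Delta \bm W^k_{(1)}.
\end{equation*}
Finally I would apply Cauchy--Schwarz, using that $\W$ is bounded (with some diameter $D_{\W}$) and that $\|\bm X^k\| \le \eta_{\bm X}$ by Corollary~\ref{CORO:bound-on-X}, to conclude that the above is at most $\bigl(D_{\W} + \tfrac{1}{2}\eta_{\bm X}\bigr)\|\Delta \bm W^k_{(1)}\|$.

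The bound for the $\S$-part is structurally identical, with $\widehat{\bm S}^k := P_{\S}(\bm S^k + \bm X^k) = \bm S^k + \Delta \bm S^k_{(1)}$ and the fact that $\S$ is the image of the bounded $\ell_\infty$-ball $\{\bm Z : \|\bm Z\|_\infty \le \lambda\}$ under the continuous linear operator $\Q^\top$, hence also bounded, giving a diameter $D_{\S}$ and a final constant $D_{\S} + \eta_{\bm X}$. The only conceptually nontrivial step is the duality identification in the first paragraph; once it is in hand, the rest is a standard projected-gradient residual argument, and the only external input is the uniform bound $\|\bm X^k\| \le \eta_{\bm X}$ from Corollary~\ref{CORO:bound-on-X}, so I do not expect a genuine obstacle.
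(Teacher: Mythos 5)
Your proof is correct and follows essentially the same route as the paper's: for the second (and novel) half, the paper likewise takes the maximizer $\lambda\Q^\top(\bm E^k)$ built from the sign matrix of $\Q(\bm X^k)$, applies the variational inequality of the projection at $\hat{\bm S}^k = P_{\S}(\bm S^k+\bm X^k)$, and closes with Cauchy--Schwarz using the boundedness of $\S$ and of $\|\bm X^k\|$. Your support-function framing is a slightly cleaner packaging, since the nonnegativity $\bm X^k\bullet\big(\lambda\Q^\top(\bm E^k)-\hat{\bm S}^k\big)\ge 0$, which the paper establishes by a three-case sign analysis, follows at once from $\hat{\bm S}^k\in\S$ and the maximality of $\lambda\Q^\top(\bm E^k)$ over $\S$. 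One caveat on the $\W$-half: the identity $\rho\sum_{i<j}|X^k_{ij}|=\max_{\bm W\in\W}\tfrac12\bm W\bullet\bm X^k$ is off by the diagonal contribution $\tfrac{\rho}{2}\sum_i|X^k_{ii}|$, because $\W$ as defined constrains all entries of $\bm W$ while the regularizer only involves off-diagonal entries; this mismatch is inherited from the paper's own setup (which simply defers that half to Lemma~9 of Nakagaki et al.), but as written your nonnegativity claim for the $\W$-part does not literally hold and would need the diagonal to be excluded or handled separately.
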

\begin{proof}
We can prove the first statement by following the proof of Lemma~9 in \cite{nakagaki2020dual}, thus we focus on the second statement. 
Due to the definition of the linear map $\Q$, we have $\sum_{i<j}\sum_{s<t} |X_{ij} - X_{st}| = ||\Q(\bm X)||_1$. Let $\E^k \in {\SMAT}^{\frac{n(n-1)}{2}}$ be the sign matrix of $\Q (\bm X^k)$ defined by $\E^k_{ij} = \text{sign}([\Q(\bm X^k)]_{ij})$
and $\hat{\bm S}^k = P_{\SC}(\bm S^k + \bm X^k)$. Then, 
\begin{align}
 & \ |\lambda\|\Q(\X^k)\|_{1} - \bm S^k \bullet \X^k| 
=  \ |\lambda \E^k \bullet \Q(\X^k) - \bm S^k \bullet \X^k| 
=  \ |\lambda\Q^\top (\E^k) \bullet \X^k - \bm S^k \bullet \X^k | \\ 
\leq & \ |\lambda\Q^\top (\E^k) \bullet \X^k - \hat{\bm S}^k \bullet \X^k| + |(\hat{\bm S}^k - \bm S^k) \bullet \X^k| 
=  \ |(\lambda \Q^\top (\E^k) - \hat{\bm S}^k) \bullet \X^k| + |\Delta \bm S^k_{(1)} \bullet \X^k|.\label{bound2}
\end{align}
The last equality was derived from Remark~\ref{REMA:X-U}
and $\Delta \bm S^k_{(1)} = P_{\SC}(\bm S^k + \bm X^k) - \bm S^k$.

We can show the bound of the second term in \eqref{bound2} by $|\Delta \bm S^k_{(1)} \bullet \X^k| \leq ||\Delta \bm S^k_{(1)}||||\X^k|| \leq \eta_{\X}||\Delta \bm S^k_{(1)}||$. Therefore, we will focus on the first term. 
 Since $\lambda \Q^\top (\E^k) = \Q^\top (\lambda \E^k) \in \SC$ and $\SC$ is a convex set, 
 a property of the projection (Proposition 2.1~(1) in \cite{HAGER08}) leads to 
\begin{align}
\left((\bm S^k + \X^k) - \hat{\bm S}^k\right) \bullet \left(\lambda\Q^\top (\E^k) - \hat{\bm S}^k\right)
= \left(\X^k - \Delta \bm S^k_{(1)}\right) \bullet \left(\lambda\Q^\top (\E^k) - \hat{\bm S}^k\right)
\leq 0.
\end{align}
This indicates
\begin{align}\label{subbound2.5}
\X^k \bullet (\lambda \Q^\top (\E^k) - \hat{\bm S}^k) \leq \Delta \bm S^k_{(1)} \bullet (\Q^\top(\E^k) - \hat{\bm S}^k).
\end{align}
From the property of projection onto $\SC$, there exists $\hat{\V} \in \SMAT^{\frac{n(n-1)}{2}}$ such that $\|\hat{\V}^k\|_{\infty} \leq \lambda$ and $\hat{\bm S}^k = \Q^\top (\hat{\V}^k)$.
Now we examine the value of $\X^k \bullet (\lambda \Q^\top (\E^k) - \hat{\bm S}^k) = \Q(\X^k) \bullet (\lambda \E^k - \hat{\V}^k) = \sum_{i = 1}^{\frac{n(n-1)}{2}}\sum_{j = 1}^{\frac{n(n-1)}{2}} [\Q(\X^k)]_{ij} (\lambda E^k_{ij} - \hat{V}^k_{ij})$.
    We will divide the value of $[\Q(\X^k)]_{i}$ into three cases.
\begin{enumerate}
\item[Case 1:] $[\Q(\X^k)]_{ij} = 0$ \\
In this case, we obtain $[\Q(\X^k)]_{ij} (E^k_{ij} - \hat{V}_{ij}) = 0$.
\item[Case 2:] $[\Q(\X^k)]_{ij} > 0$ \\
By the definition of $\E^k$, $E^k_{ij} = 1$, which means $\lambda E^k_{ij} - \hat{V}_{ij} \geq 0$ because $|{\hat{V}}^k_{ij}| \leq \lambda$. This implies $[\Q(\X^k)]_{ij} (\lambda \E^k_{ij} + \hat{\V}_{ij}) \geq 0$.
\item[Case 3:] $[\Q(\X^k)]_{ij} < 0$ \\
By the definition of $\E^k$, $E^k_{ij}= -1$, which means $\lambda E^k_{ij} - \hat{V}_{ij} \leq 0$ because $|{\hat{V}}^k_{ij}| \leq \lambda$. This implies $[\Q(\X^k)]_{ij} (\lambda \E^k_{ij} - \hat{\V}_{ij}) \geq 0$.
\end{enumerate}
Therefore, it holds that 
\begin{align}
    \X^k \bullet (\lambda \Q^\top (\E^k) - \hat{\bm S}^k) = \Q(\X^k) \bullet (\lambda \E^k - \hat{V}^k) \geq 0.
\end{align}

Applying this result to \eqref{subbound2.5}, we obtain
\begin{align}\label{bound3}
|\X^k \bullet (\lambda \Q^\top (\E^k) - \hat{\bm S}^k)| \leq  \ |\Delta \bm S^k_{(1)} \bullet (\lambda \Q^{T}(\E^k) - \hat{\bm S}^k)|,
\end{align}
which means $|\X^k \bullet (\Q^\top (\E^k) - \hat{\bm S}^k)|$ is bounded by $\|\Delta \bm S^k_{(1)}\|$, 
since $\hat{\bm S}^k \in \SC$ and $\SC$ is bounded.
Combining \eqref{bound2} and \eqref{bound3}, we can conclude this lemma.
\end{proof}

\begin{lemma}\label{lemm:liminf-obj}
Algorithm~\ref{alg} with $\epsilon = 0$ stops after reaching the optimal value $g^*$, or generate a sequence $\{\bm U^k\} \subset \FC$ such that
\[
\liminf_{k\to \infty}|g(\bm U^k) - g^*| = 0.
\]
\end{lemma}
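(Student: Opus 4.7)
The plan is to show that along a subsequence extracted from Lemma~\ref{lemm:liminf-delta}, both the duality gap $f(\bm X^k) - g(\bm U^k)$ tends to zero and the associated $\bm X^k$ tends to a primal feasible point; combined with strong duality (guaranteed by Slater's condition, Assumption~\ref{assp}~(ii)), this forces $g(\bm U^k) \to g^*$.

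First I would derive an explicit formula for the duality gap. Using $\bm X^k = \mu(\bm C + \B(\bm U^k))^{-1}$, the log-determinant simplifies to give $g(\bm U^k) = \bm b^\top \bm y^k - \mu \log\det\bm X^k + n\mu$, and the trace identity $(\bm C + \B(\bm U^k)) \bullet \bm X^k = n\mu$ rewrites $n\mu$ as $\bm C \bullet \bm X^k - (\bm y^k)^\top \A(\bm X^k) + \tfrac{1}{2}\bm W^k \bullet \bm X^k + \bm S^k \bullet \bm X^k$. Subtracting from $f(\bm X^k)$ then yields
\begin{align*}
f(\bm X^k) - g(\bm U^k) &= \Big[\rho \sum_{i<j}|X^k_{ij}| - \tfrac{1}{2}\bm W^k \bullet \bm X^k\Big] + \Big[\lambda \sum_{i<j,s<t}|X^k_{ij} - X^k_{st}| - \bm S^k \bullet \bm X^k\Big] \\
&\quad - (\bm b - \A(\bm X^k))^\top \bm y^k .
\end{align*}
By Lemma~\ref{lemm:upperbound}, the first two bracketed terms are bounded by constants times $\|\Delta \bm W^k_{(1)}\|$ and $\|\Delta \bm S^k_{(1)}\|$, respectively. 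Because $\bm b - \A(\bm X^k) = \Delta \bm y^k_{(1)}$ and $\{\bm y^k\}$ is bounded by Lemma~\ref{LEMM:bounded}, the remaining term is bounded by a constant times $\|\Delta \bm y^k_{(1)}\|$. Assembling these bounds gives $|f(\bm X^k) - g(\bm U^k)| \le C\,\|\bm R^k\|$ for some fixed $C > 0$ independent of $k$.

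Next, Lemma~\ref{lemm:liminf-delta} supplies a subsequence $\{k_j\}$ with $\|\bm R^{k_j}\| \to 0$. Corollary~\ref{CORO:bound-on-X} together with the two-sided bound $\beta_{\min}\bm I \preceq \bm X^{k_j} \preceq \beta_{\max}\bm I$ allows extraction of a further convergent subsequence with $\bm X^{k_j} \to \bar{\bm X}$, and the limit satisfies $\bar{\bm X} \succeq \beta_{\min}\bm I \succ \bm 0$. Since $\|\bm b - \A(\bm X^{k_j})\| \le \|\bm R^{k_j}\| \to 0$, continuity of $\A$ yields $\A(\bar{\bm X}) = \bm b$, so $\bar{\bm X}$ is primal feasible for \eqref{primal}. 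Continuity of $f$ on the positive definite cone then gives $f(\bm X^{k_j}) \to f(\bar{\bm X})$, and the gap bound above forces $g(\bm U^{k_j}) \to f(\bar{\bm X})$.

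Finally, because \eqref{primal} is convex and Assumption~\ref{assp}~(ii) supplies a strictly feasible primal point, Slater's condition holds and strong duality gives $p^* = g^*$. Primal feasibility of $\bar{\bm X}$ yields $f(\bar{\bm X}) \ge p^* = g^*$, while weak duality yields $g(\bm U^{k_j}) \le g^*$; combining these forces $g(\bm U^{k_j}) \to g^*$, which proves $\liminf_{k\to\infty}|g(\bm U^k) - g^*| = 0$. The main obstacle is the careful derivation of the gap identity and the bookkeeping required to absorb all three terms into a uniform multiple of $\|\bm R^k\|$; once that bound is in place, the rest is a standard compactness argument combined with strong and weak duality.
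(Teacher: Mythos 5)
Your proof is correct, and its second half takes a genuinely different route from the paper. The gap identity you derive, $f(\X^k)-g(\bm U^k) = \bigl[\rho\sum_{i<j}|X^k_{ij}| - \tfrac12 \bm W^k\bullet\X^k\bigr] + \bigl[\lambda\sum|X^k_{ij}-X^k_{st}| - \bm S^k\bullet\X^k\bigr] - (\bm b-\A(\X^k))^\top\bm y^k$, is exactly the one appearing in \eqref{split1}, and your observation that $\bm b-\A(\X^k)=\Delta\bm y^k_{(1)}$ (since the projection onto the $\R^m$ factor of $\M$ is the identity) is a clean way to absorb the third term into $\|\bm R^k\|$; the paper instead bounds it by $\|\bm y^k\|\|\A\|\|\X^k-\X^*\|$. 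Where you diverge is what follows: the paper continues deterministically, adding and subtracting $f(\X^*)$, bounding $|f(\X^k)-f(\X^*)|\le c_3\|\X^k-\X^*\|$ term by term, and then invoking an auxiliary estimate $\|\X^k-\X^*\|\le h_1(\|\bm R^k\|)$ imported from Lemma~10 of the DSPG paper. You avoid both the middle term and that imported estimate by a compactness argument: extract a convergent subsequence $\X^{k_j}\to\bar{\X}\succeq\beta_{\min}\bm I$, show $\bar{\X}$ is primal feasible since $\|\bm b-\A(\X^{k_j})\|\le\|\bm R^{k_j}\|\to 0$, and close with the sandwich $g(\bm U^{k_j})\le g^* = p^*\le f(\bar{\X}) = \lim_j g(\bm U^{k_j})$. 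This is more self-contained and arguably more elementary. The trade-off is that your argument is purely qualitative along a subsequence: it does not produce the modulus-of-continuity bound $|g(\bm U^k)-g^*|\le h_2(\|\bm R^k\|)$ valid for \emph{all} $k$, which the paper's quantitative chain does yield and which the proof of Theorem~\ref{theo:dspg} explicitly invokes (to conclude that $\|\bm R^{k_i}\|$ stays bounded away from zero whenever $g(\bm U^{k_i})<g^*-\epsilon$). So your proof establishes the lemma as stated, but if you intend to reuse it downstream you would need to supplement it with that uniform bound.
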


\begin{proof}
We split the left-hand side of the objective equation into three parts as the following inequality:
\begin{equation}\label{divideparts}
|\g(\bm U^k) - g^*|  \leq |\g(\bm U^k) - f(\X^k)| + |f(\X^k) - f(\X^*)| + |f(\X^*) - g^*|,
\end{equation}
where $\X^*$ is the optimal solution of the primal problem
\eqref{primal}.
We will show that each term is bounded by $\|\bm R^k\|$. Remind that $\bm X^k : = \bm X(\bm U^k) = \mu \left(\bm C + \B(\bm U^k)\right)^{-1}$. Since  the third term of \eqref{divideparts} is equal to zero by the duality theorem, 
we focus on the bounds of the first and second terms. The first term of \eqref{divideparts} can be bounded by 
\begin{align}
    & \ |g(\bm U^k) - f(\X^k)| \\
    = & \Bigg| \left(\bm b^\top\bm y^k  + \mu\log\det\left(\bm C - \A^\top(\bm y^k) + \frac{\bm W^k}{2} + \bm S^k\right) + n \mu - n \mu \log \mu \right) \\
    & -  \left( \bm C \bullet \bm X^k - \mu \log \det \bm X^k + \rho \sum_{i < j}|X^k_{ij}| + \lambda \sum_{i < j}\sum_{s < t}|X^k_{ij} - X^k_{st}| \right)\Bigg| \\
    = & \ \Bigg| \left(\frac{\bm W^k}{2} \bullet \bm X^k - \sum_{i<j}\rho |X_{ij}^k|\right) + \left( \bm S^k \bullet \bm X^k - \sum_{i<j,s<t}\lambda |X^k_{ij} - X^k_{st}|\right) + (\bm b - \AC(\X^k))^T \y \Bigg| \\
    \leq & \ \left|\frac{\bm W^k}{2} \bullet \bm X^k - \sum_{i<j}\rho |X_{ij}^k|\right| + \left| \bm S^k \bullet \bm X^k - \sum_{i<j,s<t}\lambda |X^k_{ij} - X^k_{st}|\right| + \|\bm y^k \|\|\AC\|\|\X^* - \X^k\|.\label{split1}
\end{align}
From Lemma~\ref{lemm:upperbound}, the sum of the first and second terms of \eqref{split1} is bounded by 
$\|\Delta \bm W_{(1)}^k\|$ and $\|\Delta \bm S_{(1)}^k\|$, respectively. Thus, they are also bounded by
$\|\bm R^k\| = \sqrt{||\bm y^k||^2 + \|\Delta \bm W_{(1)}^k\|^2 + \|\Delta \bm S_{(1)}^k\|^2}$.
From Lemma~\ref{LEMM:bounded}, $\|\bm y^k\|$ is bounded, and $\|\AC\|$ is also a constant. Therefore, 
the third term in \eqref{split1} is bounded by $\|\X^k - \X^*\|$. Combining the results, we can show that
\begin{equation}\label{boundterm1}
    |g(\bm U^k) - f(\X^k)| \leq c_1 ||\bm R^k|| + c_2 \|\X^k - \X^*\|
\end{equation}
for some positive constants $c_1$ and $c_2$.

Next, we show that $|f(\X^k) - f(\X^*)|$ is bounded by $\|\X^k - \X^*\|$ by splitting $|f(\X^k) - f(\X^*)|$ into the following four terms:
\begin{equation}
\begin{split}
    |f(\X^k) - f(\X^*)| \leq & \ \left|\C \bullet (\X^k - \X^{*})\right| + \mu \left|(\log \det \X^k - \log \det \X^{*})\right| + \left|\rho \sum_{i<j}(|X^k_{ij}| - |X^*_{ij}|)\right|\\ 
    & \ + \lambda \left|\sum_{i<j} \sum_{s<t} (|X^k_{ij} - X^k_{st}| - |X^*_{ij} - X^*_{st}|)\right|,
\end{split}
\end{equation}
and show that each term is bounded by $\|\X^k - \X^*\|$. 

Since $\bm C$ is an input matrix, the boundedness of the first term is obvious. For the second term, using the convexity of function $f_2(\X) = - \log \det \X$, we obtain
\begin{align}
    f_2(\X^k) \leq & \ f_2(\X^*) + \nabla f_2(\X^*) \bullet (\X^k - \X^*) \\
    f_2(\X^*) \leq & \ f_2(\X^k) + \nabla f_2(\X^k) \bullet (\X^* - \X^k),
\end{align}
which imply
\begin{align}
    & \ |\log \det \X^k - \log \det \X^{*}|  = |f_2(\X^k) - f_2(\X^*)| \\
     \leq & \ \max \left\{ \|\nabla f_2(\X^*)\| , \|\nabla f_2(\X^k)\|\right\} \|(\X^k - \X^*)\| 
     = \max\left\{\|(\X^*)^{-1}\| , \|(\X^k)^{-1}\|\right\}\|(\X^k - \X^*)\|.
\end{align}
and it is bounded by $\|\X^k - \X^*\|$ due to  $\|(\X^k)^{-1}\| \le \eta_{{\bm X}^{-1}}$ 
in Lemma~\ref{LEMM:bound}. 
For the third term, we have 
\begin{align}\left|\sum_{i<j}(|X^k_{ij}| - |X^*_{ij}|)\right| \leq \sum_{i<j}||X^k_{ij}| - |X^*_{ij}|| \leq \sum_{i<j}|X^k_{ij} - X^*_{ij}| \leq \frac{\sqrt{n(n-1)}}{2} \|\X^k - \X^*\|,\end{align}
and, for the fourth term, we have
\begin{align}
& \ \left|\sum_{i<j} \sum_{s<t} (|X^k_{ij} - X^k_{st}| - |X^*_{ij} - X^*_{st}|)\right|
\leq \ \sum_{i<j} \sum_{s<t} \left||X^k_{ij} - X^k_{st}| - |X^*_{ij} - X^*_{st}|\right|\\
\leq & \ \sum_{i<j} \sum_{s<t} \left|X^k_{ij} - X^k_{st} - X^*_{ij} + X^*_{st}\right| 
\leq  \sum_{i<j} \sum_{s<t} \left(|X^k_{ij} - X^*_{ij}| + |X^k_{st} - X^*_{st}|\right) \\
= & \ n(n-1) \sum_{i<j} |X^k_{ij} - X^*_{ij}| 
\leq \frac{\left(n(n-1)\right)^{\frac{3}{2}}}{2} \|\X^k - \X^*\|.
\end{align}
Therefore, we obtain that $|f(\X^k) - f(\X^*)|$ is bounded by $\|\X^k - \X^*\|$. In other words, there is a constant $c_3$ such that
\begin{equation}\label{boundterm2}
    |f(\X^k) - f(\X^*)| \leq c_3 \|\X^k - \X^*\|.
\end{equation}
Next, we employ similar steps to Lemma 10 in \cite{nakagaki2020dual} 
to derive 
\begin{equation}\label{bounddifX}
   \|\X^k - \X^*\| \leq h_1(\| \bm R^k \|), 
\end{equation}
such that $h_1(x) \ge 0$ for any $x \ge 0$ and $\lim_{x \to 0^+} h_1(x) = 0$.

Lastly, applying \eqref{boundterm1} and \eqref{boundterm2} into \eqref{divideparts}, we can conclude that
\begin{align}
\liminf_{k\to \infty}|g(\bm U^k) - g^*|  & \ \leq \liminf_{k\to \infty}\left(c_1 || \bm R^k || + (c_2 + c_3) \|\X^k - \X^*\|\right) \\
& \ \leq \liminf_{k\to \infty} \ \left(c_1 || \bm R^k || + (c_2+c_3) \cdot h_1(|| \bm R^k ||)\right). 
\end{align}
From Lemma~\ref{lemm:liminf-delta} and the definition of function $h_1$, both terms are converge to zero, which means that $\liminf_{k\to \infty}|g(\bm U^k) - g^*| = 0$. This completes the proof.
\end{proof}

Based on the above preparation, we are now in the position to show the convergence of Algorithm~\ref{alg}.

\begin{theorem}\label{theo:dspg}
Algorithm~\ref{alg} with $\epsilon = 0$ stops after reaching the optimal value $g^*$, or generates a sequence $\bm U^k$ such that
\[
\lim_{k\to \infty}|g(\bm U^k) - g^*| = 0.
\]
\end{theorem}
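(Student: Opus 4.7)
The plan is to strengthen Lemma~\ref{lemm:liminf-obj} from a $\liminf$ to a full limit by exploiting the non-monotone line-search bookkeeping. Since $g(\bm U^k)\le g^*$ for every $k$, the bound $\liminf_k|g(\bm U^k)-g^*|=0$ is equivalent to $\limsup_k g(\bm U^k)=g^*$, so it remains to show $\liminf_k g(\bm U^k)\ge g^*$.

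First I would define the windowed minimum $g_k^{\min}:=\min_{[k-M+1]_+\le j\le k}g(\bm U^j)$ and verify that $\{g_k^{\min}\}$ is non-decreasing. The line search in Step~2 together with $\langle\nabla g(\bm U^k),\bm D^k\rangle\ge 0$ (from the proof of Lemma~\ref{LEMM:bounded}) yields $g(\bm U^{k+1})\ge g_k^{\min}$; combining this with the observation that the window for $g_{k+1}^{\min}$ is obtained from the window for $g_k^{\min}$ by dropping one entry and appending $k+1$ gives $g_{k+1}^{\min}\ge g_k^{\min}$. Being bounded above by $g^*$, the sequence converges to some $g^{\min*}\le g^*$, and $g(\bm U^{k+1})\ge g_k^{\min}$ then yields $\liminf_k g(\bm U^k)\ge g^{\min*}$. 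The theorem therefore reduces to proving $g^{\min*}=g^*$.

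I would establish this by contradiction. Suppose $g^{\min*}<g^*$ and, for each $k$, pick $l(k)\in\{[k-M+1]_+,\ldots,k\}$ with $g(\bm U^{l(k)})=g_k^{\min}$; note $l(k)\to\infty$. Applying the line search inequality at iteration $l(k)-1$ and rearranging gives
\begin{equation*}
\gamma\,\sigma_{l(k)-1}\nu_{l(k)-1}\,\langle\nabla g(\bm U^{l(k)-1}),\bm D^{l(k)-1}\rangle \;\le\; g(\bm U^{l(k)})-g_{l(k)-1}^{\min}.
\end{equation*}
The right-hand side tends to zero because both $g(\bm U^{l(k)})=g_k^{\min}$ and $g_{l(k)-1}^{\min}$ converge to $g^{\min*}$, while Lemma~\ref{LEMM:steplength-bound} bounds $\sigma_{l(k)-1}\nu_{l(k)-1}\ge(\sigma\nu)_{\min}$ from below and the proof of Lemma~\ref{LEMM:bounded} gives $\langle\nabla g(\bm U^{k}),\bm D^{k}\rangle\ge\|\bm D^k\|^2/\alpha_{\max}$. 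Hence $\|\bm D^{l(k)-1}\|\to 0$, and therefore $\|\bm R^{l(k)-1}\|\to 0$ by the bound $\|\bm R^k\|\le\max\{1,\alpha_{\max}\}\|\bm D^k\|$ established inside Lemma~\ref{lemm:liminf-delta}.

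Finally, I would reuse the pointwise estimate $|g(\bm U^k)-g^*|\le c_1\|\bm R^k\|+(c_2+c_3)h_1(\|\bm R^k\|)$ derived inside Lemma~\ref{lemm:liminf-obj} to conclude $g(\bm U^{l(k)-1})\to g^*$. Since $\bm U^{l(k)}-\bm U^{l(k)-1}=\sigma_{l(k)-1}\nu_{l(k)-1}\bm D^{l(k)-1}$ and $\sigma_{l(k)-1}\nu_{l(k)-1}\le 1$, the increment tends to zero in norm, and the continuity of $g$ on the trajectory (which remains uniformly in the interior of $\mathcal{N}$ thanks to the bounds in Lemma~\ref{LEMM:bound}) forces $g(\bm U^{l(k)})\to g^*$, contradicting $g(\bm U^{l(k)})\to g^{\min*}<g^*$. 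Thus $g^{\min*}=g^*$, the $\liminf$ and $\limsup$ collapse, and $\lim_k g(\bm U^k)=g^*$. The main obstacle I anticipate is justifying the last continuity step cleanly, since one must confirm that the small $\bm D^{l(k)-1}$ increments keep the iterates in a compact subset of $\mathcal{F}$ on which $g$ is Lipschitz; this is what allows the control of $\|\bm R^{l(k)-1}\|$ to propagate from $g(\bm U^{l(k)-1})$ to $g(\bm U^{l(k)})$ and close the contradiction.
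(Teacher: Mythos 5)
Your proof is correct, but it takes a genuinely different route from the paper's. The paper argues by contradiction on the set of ``bad'' iterates: assuming infinitely many indices $k_i$ with $g(\bm U^{k_i})<g^*-\epsilon$, it shows the gaps satisfy $k_{i+1}-k_i\le M$, uses the error bound $|g(\bm U^{k})-g^*|\le h_2(\|\bm R^{k}\|)$ from Lemma~\ref{lemm:liminf-obj} to force $\|\bm D^{k_i}\|\ge D_{\min}>0$, and then telescopes the resulting Armijo increase $\bar\delta$ along that subsequence until $g(\bm U^{k_j})$ would exceed $g^*$. You instead run the standard GLL-style argument: the windowed minimum $g_k^{\min}$ is non-decreasing and converges to some $g^{\min*}\le g^*$, which lower-bounds $\liminf_k g(\bm U^k)$; evaluating the line-search inequality at the argmin index $l(k)$ squeezes the Armijo term between $0$ and $g_k^{\min}-g_{l(k)-1}^{\min}\to 0$, so $\|\bm D^{l(k)-1}\|\to 0$, hence $g(\bm U^{l(k)-1})\to g^*$ by the same error bound, and a one-step Lipschitz estimate transfers this to $g(\bm U^{l(k)})=g_k^{\min}$, giving $g^{\min*}=g^*$. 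Both proofs rest on the same ingredients (Lemma~\ref{LEMM:steplength-bound}, the inequality $\langle\nabla g(\bm U^k),\bm D^k\rangle\ge\|\bm D^k\|^2/\alpha_{\max}$, and the pointwise bound inside Lemma~\ref{lemm:liminf-obj}). The continuity step you flag as the main obstacle is the only extra ingredient your route needs, and it is already available: the proof of Lemma~\ref{LEMM:steplength-bound} shows $\bm C+\B(\bm U^k)+\nu\B(\bm D^k)\succeq(1-\tau)\frac{\mu}{\beta_{\max}}\bm I$ along the accepted segment, so $\bm X(\cdot)$ and hence $\nabla g$ are uniformly bounded there and $|g(\bm U^{l(k)})-g(\bm U^{l(k)-1})|=O(\|\bm D^{l(k)-1}\|)\to 0$. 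What your approach buys is a constructive proof that the full limit exists (no contradiction is actually needed once $g^{\min*}=g^*$ is shown); what the paper's buys is that it never needs a continuity estimate for $g$ itself. One minor remark: your opening reduction to $\limsup_k g(\bm U^k)=g^*$ via Lemma~\ref{lemm:liminf-obj} is redundant, since your later paragraphs already give $\liminf_k g(\bm U^k)\ge g^{\min*}=g^*$, which together with dual feasibility $g(\bm U^k)\le g^*$ closes the argument.
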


\begin{proof}
We will derive a contradiction to show this theorem. Suppose that there is $\epsilon > 0$ such that we have infinite sequence $\{k_1, k_2, \dots\}$ such that $g(\bm U^{k_i}) < g^* - \epsilon$ for all positive integer $i$, and $g(\bm U^{j}) \ge g^* - \epsilon$ for all $j \notin \{k_1, k_2, \dots\}$.

We will show that the sequence should satisfy $k_{i+1} - k_i \leq M$. Suppose that $k_{i+1} - k_i > M$. Therefore, $g(\bm U^l) \geq g^* - \epsilon$ for all $k_i < l < k_{i+1}$. From the condition in Step 2 of Algorithm~\ref{alg} and 
$\langle\nabla g(\bm U^{k_{i+1}-1}),\,\bm D^{k_{i+1}-1}\rangle \ge 0$, we obtain
\begin{equation}
    g(\bm U^{k_{i+1}}) \geq \min_{k_{i+1} - M \leq l \leq k_{i+1}-1} g(\bm U^l) + \gamma\sigma_k \nu_k\langle\nabla g(\bm U^{k_{i+1}-1}),\,\bm D^{k_{i+1}-1}\rangle \geq g^* - \epsilon.
\end{equation}
and contradicts with the assumption $g(\bm U^{k_i}) < g^* - \epsilon$ for all $i$. Thus, the sequence $\{k_1, k_2, \dots\}$ should satisfy $k_{i+1} - k_i \le M$ and should be infinite. 

From Lemma~\ref{lemm:liminf-obj}, $|g(\bm U^k) - g^*| \leq h_2(|| \bm R^k||)$,
therefore, there is $\bar{\epsilon}$ such that $|| \bm R^{k_i}|| > \bar{\epsilon}$ for all $i$. Applying the same proof as Lemma \ref{lemm:liminf-delta}, $|| \bm R^{k_i}||$ has a lower bound, hence, $|| \bm D^{k_i}||$ has a lower bound $D_{\min} > 0$ due to \cite[Lemma 5]{nakagaki2020dual}. 
This leads to that $\gamma\sigma_{k_i} \nu_{k_i}\langle\nabla g(\bm U^{k_i}),\,\bm D^{k_i}\rangle \geq \gamma \sigma_{k_i} \nu_{k_i} \frac{\|\bm D^{k_i}\|^2}{\alpha_{\max}} > \gamma (\sigma\nu)_{\min} \frac{\|\bm D^{k_i}\|^2}{\alpha_{\max}}$, thus, $\langle\nabla g(\bm U^{k_i}),\,\bm D^{k_i} \rangle$ has a lower bound $\bar{\delta} := \gamma (\sigma \nu)_{\min} \frac{D_{\min}^2}{\alpha_{\max}} $. Using the definition $g_k^{\min} := \min_{k - M + 1 \leq j \leq k} g(\bm U^j)$, for any $k_i > M$, we have
\begin{equation}
    g(\bm U^{k_{i}}) \geq g^{\min}_{k_{i}-1} + \bar{\delta}.
\end{equation}

From the property of sequence $\{k_i\}$, we have $k_{i} - M \le k_{i-1} \le k_{i} - 1$. It follows that there exists a positive integer $l(i)$ such that $g(\bm U^{k_{l(i)}}) = g^{\min}_{k_{i}-1}$ and $k_i - k_{l(i)} \leq M$. It means that for any $k_i > M$, there exists $k_{l(i)} \ge k_i - M$ such that
\begin{equation}
    g(\bm U^{k_{i}}) \geq g(\bm U^{k_{l(i)}}) + \bar{\delta}.
\end{equation}
Repeating the inequality $P$ times, we can see that for any positive integer $P$ and $k_j > PM$, there exists $k_{l^{P}(j)} > k_j - PM$  such that
\begin{equation}
    g(\bm U^{k_{j}}) \geq g(\bm U^{k_{{l^P}(j)}}) + P\bar{\delta},
\end{equation}
where ${l^P}(j) := l(l(\cdots l(j) \cdots ))$, the $P$ repetition of $l$. From Lemma \ref{LEMM:bounded}, $g(\bm U^0)$ is a lower bound of $g(\bm U^{k_i})$. Therefore, when $g^*$ is the optimal value of the  dual problem~\eqref{dual} and we take $P > \frac{g^* - g(\bm U^0)}{\bar{\delta}}$ and $k_j > PM$, we get $g(\bm U^{k_j}) > g^*$, which contradicts the optimality of $g^*$. This completes the proof.

\end{proof}

\section{Computation Complexity}\label{subproblem}

In this section, we focus on the computation complexity of obtaining the projection
\begin{align}
P_{\mathcal{M}}(\bm U) = (\y, P_{\W}(\bm W), P_{\S}(\bm S)).
\end{align}
This is the main reason for the use of the variable $\bm S$. 
Recall that we can also apply the original DSPG to \eqref{dual_pre}
directly by using the variable $(\bm y,\bm W,\bm Z)$ 
and define the projection of $\bm Z$ by the box constraint like $\W$.
However, the number of constraints in $\| \bm Z \|_{\infty}$ amounts to approximately $\frac{n^4}{8}$, 
and this needs $\OC(n^4)$ operations. 
The main purpose of introducing $\bm S$ is to reduce 
the size of the variable matrix, 
and we will show that the cost of the projection onto $\SC$
can be reduced to $\OC(n^2 \log n)$. 

Let $\bar{n} = \frac{n(n-1)}{2}$ and $\s = \{s_1,s_2,\dots, s_{\bar{n}}\} = vect(\bm S)$.
Therefore, the subproblem for computing $P_{\S}(\bm S)$ can be reduced to the form: 
\begin{equation}\label{projectionS}
\begin{split}
 \min_{\bm u \in \Real^{\bar{n}}, \bm s \in \Real^{\frac{\bar{n} (\bar{n}-1)}{2}} } & \ \  \frac{1}{2} \sum_{i = 1}^{\bar{n}} (u_{i} - s_{i})^2\\
 \mbox{s.t.}  &  \ \ u_{i} = \sum_{j > i} z_{ij} - \sum_{j < i} z_{ji} \ \text{for} \ i =1,\ldots \bar{n} \\
 & \ \  |z_{ij}| \leq \lambda
\ \text{for} \ 1 \le i < j \le \bar{n}.
\end{split}
\end{equation}
Its dual problem is
\begin{equation}\label{projectionSDual}
\begin{split}
 \min_{\bm \pi \in\Real^{\bar{n}}} & \ \ p(\pib) := \frac{1}{2} \sum_{i = 1}^{\bar{n}} (\pi_{i} - s_{i})^2 + \lambda \sum_{i < j} |\pi_{i} - \pi_{j}|.
\end{split}
\end{equation}
Since these primal and dual problems are convex problems,
if $\u^*$ and $\pib^*$ are their optimal solutions, it holds that 
\begin{align}
    \u^* - \s = \pib^*.
\end{align}

Let $\s'$ be the vector whose components are the components of $\s$ in the non-decreasing order. We use $\T'$ to denote a permutation matrix $\T$ that corresponds to the order $\s'$, that is, $\s' = \T' \s$. We can rewrite the problem \eqref{projectionSDual} as
\begin{equation}\label{projectionSDual*}
\begin{split}
 \min_{\pi' \in\Real^{\bar{n}}} & \ \ p'(\pib') := \frac{1}{2} \sum_{i = 1}^{\bar{n}} (\pi_{i}' - s'_{i})^2 + \lambda \sum_{i < j} |\pi_{i}' - \pi_{j}'|.
\end{split}
\end{equation}

If  an optimal solution of \eqref{projectionSDual*} is $\pib'$, 
that of \eqref{projectionSDual} is given by $(\T')^{-1}\pib'$. 
Let $\pib''$ be the vector whose components are 
the components of $\pib'$ in the non-decreasing order.

\begin{lemma}\label{LEMM:nondecreasing}
If $\pib'$ is an optimal solution of \eqref{projectionSDual*}, $\pib''$ is also an optimal solution of \eqref{projectionSDual*}.
\end{lemma}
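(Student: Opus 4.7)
My plan is to exploit two structural facts about the objective $p'$: the regularizer is permutation-invariant in the components of $\pib'$, and the quadratic data-fitting term is minimized over permutations precisely when $\pib'$ is sorted in the same order as $\s'$. Since $\s'$ is already in non-decreasing order, the non-decreasing rearrangement $\pib''$ of $\pib'$ can only decrease (or preserve) the objective, and must therefore be optimal as well.

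Concretely, I would first observe that
\begin{equation*}
\sum_{i<j}|\pi_i' - \pi_j'| = \frac{1}{2}\sum_{i,j}|\pi_i' - \pi_j'|
\end{equation*}
depends only on the multiset $\{\pi_1',\ldots,\pi_{\bar n}'\}$, so this term takes the same value at $\pib'$ and at any rearrangement of $\pib'$, including $\pib''$. Next, I would expand
\begin{equation*}
\frac{1}{2}\sum_{i=1}^{\bar n}(\pi_i' - s_i')^2 = \frac{1}{2}\sum_{i=1}^{\bar n}(\pi_i')^2 - \sum_{i=1}^{\bar n}\pi_i' s_i' + \frac{1}{2}\sum_{i=1}^{\bar n}(s_i')^2,
\end{equation*}
and note that the outer two sums are invariant under permutations of the $\pi_i'$. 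Hence the only part of $p'$ that is affected by rearranging the components of $\pib'$ is the inner-product term $-\sum_i \pi_i' s_i'$.

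The key tool is then the rearrangement inequality: for any sequence $(\pi_i')$ and any non-decreasing sequence $(s_i')$, the sum $\sum_i \pi_i' s_i'$ is maximized over permutations of $(\pi_i')$ precisely when $(\pi_i')$ is itself sorted in non-decreasing order. Since $\s'$ is non-decreasing by construction and $\pib''$ is the non-decreasing rearrangement of $\pib'$, this gives $\sum_i \pi_i'' s_i' \ge \sum_i \pi_i' s_i'$, and therefore
\begin{equation*}
\tfrac{1}{2}\sum_{i=1}^{\bar n}(\pi_i'' - s_i')^2 \le \tfrac{1}{2}\sum_{i=1}^{\bar n}(\pi_i' - s_i')^2.
\end{equation*}
Combining this with the permutation-invariance of the $\ell_1$ pairwise-difference term yields $p'(\pib'') \le p'(\pib')$. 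Since $\pib'$ is optimal and $\pib''$ is feasible (the problem is unconstrained), $\pib''$ must also be an optimal solution.

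There is no real obstacle: the only step that requires care is the correct invocation of the rearrangement inequality and the observation that the regularizer $\sum_{i<j}|\pi_i' - \pi_j'|$ is a symmetric function of the coordinates, which is immediate. The lemma is essentially a consequence of these two facts, and the proof should be short.
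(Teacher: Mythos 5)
Your proof is correct, and it reaches the conclusion by a slightly different route than the paper. The paper argues locally: it takes any adjacent inversion $\pi'_t > \pi'_{t+1}$, swaps the two entries, and computes directly that $p'(\pib') - p'(\tilde{\pib}) = -(\pi'_t - \pi'_{t+1})(s'_t - s'_{t+1}) \ge 0$ because $\s'$ is non-decreasing (the pairwise $\ell_1$ term being invariant under the swap), then iterates these swaps until $\pib''$ is reached. You instead argue globally: you isolate the only permutation-sensitive part of the objective as the bilinear term $-\sum_i \pi'_i s'_i$ and invoke the rearrangement inequality to conclude $\sum_i \pi''_i s'_i \ge \sum_i \pi'_i s'_i$, hence $p'(\pib'') \le p'(\pib')$. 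The two arguments are essentially equivalent in substance --- the standard proof of the rearrangement inequality is itself by adjacent transpositions --- but yours is cleaner in that it dispenses with the iteration/termination step and makes explicit that the regularizer is a symmetric function of the coordinates, a fact the paper uses only implicitly. The paper's version buys a self-contained elementary computation and, as a by-product, identifies exactly when the swap leaves the objective unchanged ($s'_t = s'_{t+1}$). One small imprecision on your side: the rearrangement inequality gives that the non-decreasing ordering \emph{attains} the maximum of $\sum_i \pi'_i s'_i$, not that it is the unique maximizer (ties in $\s'$ or $\pib'$ allow other orderings to achieve it), so ``precisely when'' should be weakened; this does not affect the validity of the argument, since only the inequality $\sum_i \pi''_i s'_i \ge \sum_i \pi'_i s'_i$ is needed.
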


\begin{proof}
Suppose that there is an index $t$ such that $\pi'_t > \pi'_{t+1}$. Let $\tilde{\pib}$ be the vector obtained by swapping the components at the indexes $t$ and $t+1$ of $\pib'$. Therefore,
\begin{align*}
p'(\pib') - p'(\tilde{\pib}) &= \frac{1}{2} ((\pi'_t - s'_t)^2 + (\pi'_{t+1} - s'_{t+1})^2 - (\pi'_{t} - s'_{t+1})^2 - (\pi'_{t+1} - s'_t)^2) \\
&= - \pi'_t s'_t - \pi'_{t+1} s'_{t+1} + \pi'_t s'_{t+1} + \pi'_{t+1} s'_t \\
&= - (\pi'_t - \pi'_{t+1}) (s'_t - s'_{t+1}) \geq 0,
\end{align*}
where the equality holds if and only if $\s'_t = \s'_{t+1}$. This means that we can swap the value of $\pi'_t$ and $\pi'_{t+1}$ until we obtain $\pib''$, thus $\pib''$ is also an optimal solution.
\end{proof}

This lemma guarantees that there is an optimal solution $\pib'$ that satisfies the condition $\pi'_1 \leq \pi'_2 \leq \dots \le \pi'_{\bar{n}}$. We can add this condition into \eqref{projectionSDual*} as below:
\begin{equation}\label{projectionSDualOrder}
\begin{split}
\min_{\pi' \in\Real^{\bar{n}}} & \ \ p'(\pib') = \frac{1}{2} \sum_{i = 1}^{\bar{n}} (\pi'_{i} - s'_{i})^2 + \lambda \sum_{i < j} |\pi'_{i} - \pi'_{j}| \\
\mbox{s.t.}  &  \ \ \pi'_1 \leq \pi'_2 \leq \dots \pi'_{\bar{n}}.
\end{split}
\end{equation}
For any $\pib' \in \Real^{\bar{n}}$ in the non-decreasing order,
it holds that 
\[
\sum_{i<j} |\pi'_{i} - \pi'_{j}| = \sum_{i = 1}^{\bar{n}} (2i - \bar{n} - 1)\pi'_i.
\]
Therefore, it follows that
\begin{align*}
& \ \frac{1}{2} \sum_{i = 1}^{\bar{n}} (\pi'_{i} - s'_{i})^2 + \lambda \sum_{i < j} |\pi'_{i} - \pi'_{j}| 
=  \frac{1}{2} \sum_{i = 1}^{\bar{n}} (\pi'_{i}- s'_{i})^2 + \lambda \sum_{i = 1}^{\bar{n}} (2i - \bar{n} - 1)\pi'_{i} \\
= & \ \frac{1}{2} \sum_{i = 1}^{\bar{n}} \Big( (\pi'_{i} - s'_{i})^2 + 2\lambda (2i - \bar{n} - 1)\pi'_{i} \Big)
=  \ \frac{1}{2} \sum_{i = 1}^{\bar{n}} \Big(\pi'_{i} - (2s'_{i} - 2\lambda (2i - \bar{n} - 1))\pi'_{i} + {s'_{i}}^2\Big) \\
= & \ \frac{1}{2} \sum_{i = 1}^{\bar{n}} \Big(\big(\pi'_{i} - ( s'_{i}- \lambda (2i - \bar{n} - 1))\big)^2 \Big) + c,
\end{align*}
where $c = \frac{1}{2} \sum_{i=1}^{\bar{n}} \left(2 \lambda (2i - \bar{n} - 1) s'_{i} -\big( \lambda (2i - \bar{n} - 1)\big)^2\right)$ is a constant. Letting $s^{\dagger}_{i} = s'_{i}- \lambda (2{i} - n - 1)$, we rewrite \eqref{projectionSDualOrder} as
\begin{equation}\label{pavform}
\begin{split}
\min_{\pib'' \in\SMAT^{\bar{n}}} & \ \ p''(\pib^{\dagger}) := \frac{1}{2} \sum_{{i} = 1}^{\bar{n}} \big(\pi^{\dagger}_{i} - s^{\dagger}_{i}\big)^2 \\
\mbox{s.t.}  &  \ \ \pi^{\dagger}_1 \leq \pi^{\dagger}_2 \leq \dots \pi^{\dagger}_{\bar{n}}.
\end{split}
\end{equation}
The solution of \eqref{pavform} can be computed by using pool-adjacent-violators algorithm~\cite{PAVApaper,henzi2020accelerating} in $\OC(\bar{n}) = \OC(n^2)$ operations. 
After we obtain the optimal solution  of \eqref{pavform} as $\pib^{**}$, we can derive the optimal solution $\u^*$ of \eqref{projectionS} by $ \u^{*}= (\T')^{-1}\pib^{**} + \s$. Using this result, we have the following theorem.

\begin{theorem}\label{LEMM:Complexity}
We can compute the projection
\begin{align}
P_{\mathcal{M}}(\bm U) = (\y, [\bm W]_{\leq \rho}, [\bm S]_{\S}).
\end{align} in $\OC(n^2 \log n)$
\end{theorem}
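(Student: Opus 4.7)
The plan is to exploit the product structure of $\MC = \R^m \times \WC \times \SC$, which makes the projection separable into three independent projections, and then to bound the cost of each piece. The projection onto $\R^m$ is the identity, and the projection onto $\WC$ is entrywise clipping to $[-\rho,\rho]$ on the $n(n-1)/2$ strictly upper-triangular entries (with the symmetric extension), which costs $\OC(n^2)$. The whole cost therefore reduces to bounding the cost of computing $P_{\SC}(\bm S)$.

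For $P_{\SC}(\bm S)$, I would chain together the transformations already established just above the theorem. First, I would note that obtaining the vector $\bm s = vect(\bm S)$ and, symmetrically, assembling the output back into a symmetric matrix, are each $\OC(n^2)$. Next, I would invoke the reformulations leading to~\eqref{pavform}: by strong duality, problem~\eqref{projectionS} is solved through the dual~\eqref{projectionSDual}; by Lemma~\ref{LEMM:nondecreasing} an optimal dual solution is non-decreasing once $\bm s$ is sorted, giving the ordered form~\eqref{projectionSDualOrder}; completing the square then yields the isotonic regression problem~\eqref{pavform} with shifted data $s^{\dagger}_i = s'_i - \lambda(2i - \bar n -1)$. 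The permutation matrix $\bm T'$ that sorts $\bm s$ can be obtained by any standard comparison sort in $\OC(\bar n \log \bar n) = \OC(n^2 \log n)$ operations, and its inverse is of course applied in $\OC(\bar n) = \OC(n^2)$.

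Once~\eqref{pavform} is set up, the pool-adjacent-violators algorithm~\cite{PAVApaper,henzi2020accelerating} produces $\pib^{**}$ in $\OC(\bar n) = \OC(n^2)$. Recovering the primal optimizer of~\eqref{projectionS} via $\u^* = (\T')^{-1}\pib^{**} + \bm s$ is another $\OC(n^2)$ step, and rebuilding the symmetric matrix $P_{\SC}(\bm S)$ from $\u^*$ is $\OC(n^2)$ as well. Summing these contributions, the cost of $P_{\SC}(\bm S)$ is
\begin{equation*}
\underbrace{\OC(n^2)}_{\text{vectorize}} + \underbrace{\OC(n^2 \log n)}_{\text{sort}} + \underbrace{\OC(n^2)}_{\text{shift to } s^{\dagger}} + \underbrace{\OC(n^2)}_{\text{PAVA}} + \underbrace{\OC(n^2)}_{\text{unsort, reassemble}} = \OC(n^2 \log n),
\end{equation*}
which dominates the $\OC(n^2)$ cost of the box-projection onto $\WC$. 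Combining the three component projections yields the claimed bound $\OC(n^2 \log n)$ for $P_{\MC}(\bm U)$.

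I do not expect any single step to be a serious obstacle, since every building block is already in place: the decomposability of $P_{\MC}$, the chain of equivalences culminating in~\eqref{pavform}, and the known linear-time complexity of PAVA. The only care needed is in the bookkeeping that verifies the sorting step is genuinely the bottleneck, and in pointing out that $\bar n = n(n-1)/2$ so that $\OC(\bar n \log \bar n)$ is indeed $\OC(n^2 \log n)$ (as opposed to $\OC(n^2 \log \bar n)$, which is the same up to a constant factor since $\log \bar n = 2\log n + \OC(1)$).
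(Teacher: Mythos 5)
Your proposal is correct and follows essentially the same route as the paper's proof: decompose $P_{\MC}$ into its three independent components, clip entrywise for $\WC$ in $\OC(n^2)$, and for $\SC$ chain the reduction to \eqref{pavform} with the $\OC(n^2\log n)$ sort as the bottleneck and $\OC(n^2)$ for PAVA and the inverse permutation. No gaps.
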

\begin{proof}
For the complexity to obtain $P_{\W}(\bm W)$, we compute $\max\{ -\rho , \min \{ \rho, W_{ij}\}\}$ for each component of $\bm W$, Therefore, the complexity of $\OC(n^2)$ is enough for $P_{\W}(\bm W)$.
We divide the computation of $P_{\SC}(\bm S)$ into three parts.
The first part is extracting the upper-triangular part of $\bm S$ as the vector $\s = vect(\bm S)$ and sorting the components of $\s$ in the non-decreasing order, 
which has a complexity of $\OC(\bar{n} \log \bar{n}) = \OC(n^2 \log n)$. 
The second part is the pool-adjacent-violators algorithm, which has complexity $\OC(\bar{n}) = \OC(n^2)$. The third part is to convert the optimal solution of \eqref{projectionSDualOrder} back to the optimal solution of \eqref{projectionSDual}, which has complexity $\OC(\bar{n}) = \OC(n^2)$. Therefore, the total  complexity of computing the projection is
\begin{align}
\OC(n^2 \log n) + \OC(n^2) + \OC(n^2) = \OC(n^2 \log n).
\end{align}
\end{proof}

\section{Numerical Experiments}\label{numerical}

We conducted numerical experiments for Algorithm~\ref{alg}, 
DSPG~\cite{nakagaki2020dual} and Logdet-PPA~\cite{wang2010solving} on 
randomly generated synthetic data 
and a real animal dataset~\cite{kemp2008discovery}. 
All experiments are performed in Matlab R2022b on a 64-bit PC with Intel Core i7-7700K CPU (4.20 GHz, 4 cores) and 16 GB RAM.

For Algorithm~\ref{alg} and DSPG, we set the  parameters as 
$\gamma = 10^{-3}, \tau = 0.5 , \sigma = 0.5, \alpha_{\min} = 10^{-8}, \alpha_{\max} = 10^8$ and $M = 5$. 
We take an initial point 
$(\bm y^0, \bm W^0, \bm S^0) = (\0, \0, \0)$ for Algorithm~\ref{alg} 
and $(\bm y^0, \bm W^0, \bm Z^0) = (\0, \0, \0)$ for DSPG. For Logdet-PPA, we employed its default parameters.

The performance of each algorithm is evaluated with the number of iterations, the execution time, and a relative gap which is defined as 
\[
\text{Gap} = \frac{|P - D|}{\max\{ 1 , (|P| + |D|) / 2\}},
\]
where $P$ and $D$ are the output values of primal and dual objective functions, respectively. 
In addition, the convergence rate is evaluated
with a normalization error in each iteration by
\[
\text{E}^{k} = \frac{|D_k - D_{\textrm{opt}}|}{|D_0 - D_{\textrm{opt}}|},
\]
where $D_0, D_k, D_{\textrm{opt}}$ are objective values of $g$ at the initial point, the $k$th iteration, and the output point, respectively.

As the stopping criteria, 
we stopped DSPG and Algorithm~\ref{alg} when the iterate satisfies \\ 
$ 
    \|(\Delta \y_{(1)}^k, \Delta \W_{(1)}^k, \Delta \S_{(1)}^k)\|  \leq 10^{-16},
$ 
or reached 5000 iterations. Logdet-PPA was stopped when
$ 
    \max\{R_P , R_D\} \leq  10^{-6},
$ 
where $R_P$ and $R_D$ denote the primal and dual feasibility, respectively.
We chose these stopping criteria so that the three algorithms attained
a relative gap of about $10^{-7}$. 

\subsection{Randomly Generated synthetic Data}\label{sec:syntheticexp}

In this experiment, we generated input data following the procedure in 
\cite{nakagaki2020dual}. We first generated a sparse positive definite matrix 
$\BSigma^{-1} \in \SMAT^n$ with a density parameter $\sigma = 0.1$,
then 
we constructed the covariance matrix $\C \in \SMAT^n$ from $2n$ samples of the multivariate Gaussian distribution $\NC ( \0 , \BSigma ) $.
For a nonnegative integer $p$, let $\Omega_p := \{(i,j) \ | \  \Sigma^{-1}_{ij} = 0, \ 1 \le i < j \le n, \ |i - j| \leq p \}$.

We solved the following problem with unconstrained instances ($p = 0$) and constrained instances ($p = 2$ and $p=\left \lfloor{0.3*n}\right \rfloor$):
\begin{equation}\label{primalexp}
\begin{split}
 \min_{\bm X\in\bS^n} & \ \ f(\bm X) := \bm C \bullet \bm X - \mu \log \det \bm X + \rho \sum_{i < j}|X_{ij}| + \lambda \|\Q(\bm X)\|_1\\
\mbox{s.t.}  & \ \  X_{ij} = 0 \ \forall (i,j) \in \Omega_p,  \bm X \succ \bm 0.
\end{split}
\end{equation}
In the objective function, we employed the same weight parameters 
$\rho = \frac{5}{n}$ and $\lambda = \frac{\rho}{n(n-1)/2}$
as in \cite{nakagaki2020dual}.

We divide the experiments into two parts; the first is a comparison between three algorithms, while the second is the performance test of Algorithm~\ref{alg} with large instances.

Table~\ref{table:comparesyn-low} shows numerical results on problem \eqref{primalexp}.
The first column is the size $n$. The second, third, and fourth columns 
are the number of iterations, the computation time, and the relative gap for Algorithm~\ref{alg}. Similarly, the other six columns are for DSPG and Logdet-PPA.

\begin{table*}[tp] \centering \ra{1.3}
	\caption{Comparison of the performance for randomly generated synthetic data with small matrices}
	\label{table:comparesyn-low} 
	\addtolength{\tabcolsep}{-3pt}
\begin{tabular}{@{}rrrrrrrrrrrr@{}}\toprule 
	\multicolumn{12}{c}{$p=0$ (unconstrained case)}
	\\
	\hline
	&\multicolumn{3}{c}{Algorithm~\ref{alg}} & \phantom{abc}& \multicolumn{3}{c}{DSPG} & \phantom{abc} & \multicolumn{3}{c}{Logdet-PPA}\\ 
\cmidrule{2-4} \cmidrule{6-8} \cmidrule{10-12} $n$ & Iterations &  Time (s) & Gap &&  Iterations & Time (s) & Gap &&  Iterations & Time(s) & Gap \\
$10$ & 41 &  0.03 & 7.78e-9 && 350 & 0.11 & 1.50e-7 && 2 & 0.12 & 2.78e-7 \\ 
$20$ & 47 &  0.01 & 1.77e-8 && 1303 & 1.27 & 2.80e-7 && 14 & 1.77 & 2.11e-8\\
$25$ & 51 & 0.02 & 3.92e-9 && 1414 & 2.96 & 1.25e-7 && 84 & 19.64 & 1.86e-8\\
	\hline
	\multicolumn{12}{c}{$p=2$ (constrained case)} \\
	\hline		
	&\multicolumn{3}{c}{Algorithm~\ref{alg}} & \phantom{abc}& \multicolumn{3}{c}{DSPG} & \phantom{abc} & \multicolumn{3}{c}{Logdet-PPA}\\ 
\cmidrule{2-4} \cmidrule{6-8} \cmidrule{10-12} $n$ & Iterations & Time (s) & Gap &&  Iterations & Time (s) & Gap &&  Iterations & Time (s) & Gap \\
$10$ & 94 &  0.01 & 3.17e-8 && 338 & 0.06 & 1.29e-7 && 6 & 0.20 & 2.47e-6 \\ 
$20$ & 104 &  0.03 & 1.31e-8 && 1492 & 1.40 & 1.49e-7 && 19 & 2.30 & 1.66e-7\\
$25$ & 90 & 0.02 & 1.59e-9  && 1289 & 2.77 & 6.61e-8  && 84 & 19.40 & 2.47e-8\\
    \hline
	\multicolumn{12}{c}{$p = \left \lfloor{0.3*n}\right \rfloor\ $ (constrained case)} \\
	\hline		
	&\multicolumn{3}{c}{Algorithm~\ref{alg}} & \phantom{abc}& \multicolumn{3}{c}{DSPG} & \phantom{abc} & \multicolumn{3}{c}{Logdet-PPA}\\ 
\cmidrule{2-4} \cmidrule{6-8} \cmidrule{10-12} $n$ & Iterations & Time (s) & Gap &&  Iterations & Time (s) & Gap &&  Iterations & Time (s) & Gap \\
$10$ & 90 & 0.01 & 1.05e-7 && 454 & 0.07 & 2.74e-7 && 6 & 0.18 & 4.08e-7 \\ 
$20$ & 102 & 0.02 & 2.54e-8 && 1326 & 1.75 & 2.13e-7 && 51 & 5.25 & 7.92e-9\\
$25$ & 83 & 0.03 & 1.36e-8 && 1629 & 3.34 & 5.23e-8  && 102 & 20.97 & 8.95e-8\\
\bottomrule 
\end{tabular}
\end{table*}

We can observe from Table~\ref{table:comparesyn-low} that the proposed method 
(Algorithm~\ref{alg}) outperforms DSPG and Logdet-PPA
in the viewpoint of computation time in both unconstrained and constrained cases. 
For the unconstrained case $p=0$ and the size $n=25$, the proposed method 
solves the problem in 0.02 seconds, while DSPG and Logdet-PPA require
2.96 seconds and 19.64 seconds, respectively.
In addition, Algorithm~\ref{alg} is also efficient 
for constrained cases.
According to the structure of the last term in the objective function of \eqref{primalexp} (the term whose weight is $\lambda$), 
the number of elements in the summation grows rapidly when $n$ increases. 
Algorithm~\ref{alg} can deal with this problem better than DSPG and Logdet-PPA. 

Figure~\ref{fig:graph-compare-n25-2} displays the convergence rates of three methods in the case $n=25$ and $p = 2$. The horizontal axis of the graph is the computation time in seconds, and the vertical axis is the ${\text{E}}^k$ value. 
It is clear from the figure that the convergence speed of Algorithm~\ref{alg} is remarkably faster than those of DSPG and Logdet-PPA.
\begin{figure}[tp]
    \centering
    \includegraphics[clip, trim=1.5cm 9cm 1.5cm 9cm,scale = 0.6]{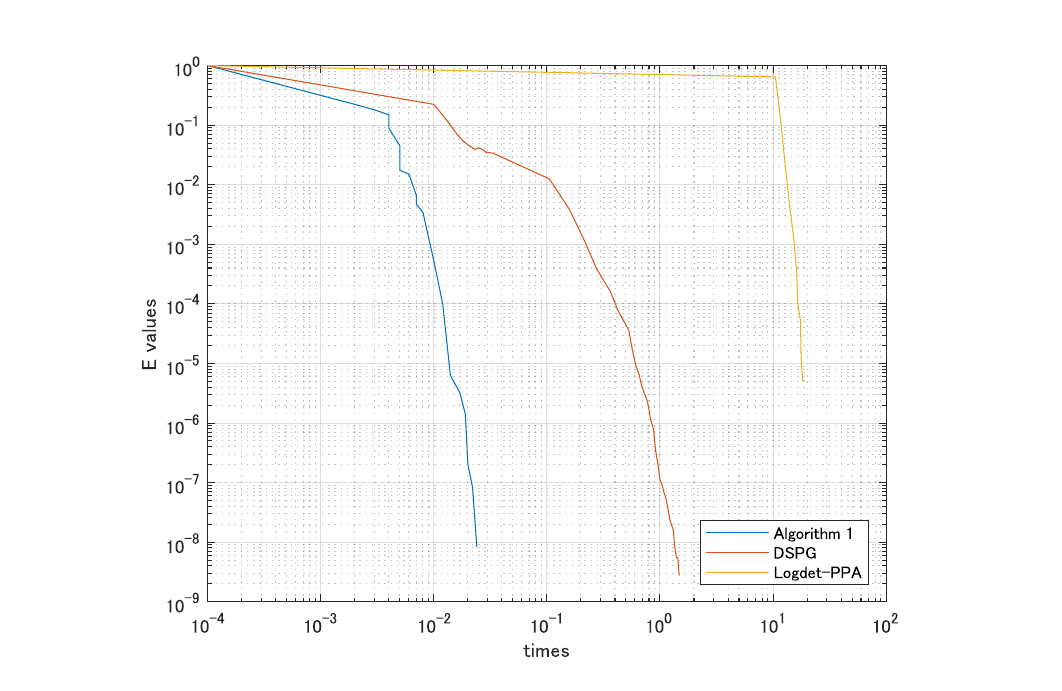}
    \caption{Comparison of the convergence rate for randomly generated synthetic data with $n = 25$ and $p = 2$}
    \label{fig:graph-compare-n25-2}
\end{figure}

Table~\ref{table:comparesyn-mid} shows numerical results on \eqref{primalexp} with medium matrices. We excluded Logdet-PPA from Table~\ref{table:comparesyn-mid}, since 
Logdet-PPA demanded more than 128 GB memory space for $n=30$.
\begin{table}[H] \centering \ra{1.3}
	\caption{Comparison of the performance for randomly generated synthetic data with medium matrices}
	\label{table:comparesyn-mid} 
\begin{tabular}{@{}rrrrrrrr@{}}\toprule 
	\multicolumn{8}{c}{$p=0$ (unconstrained case)}
	\\
	\hline
	&\multicolumn{3}{c}{Algorithm~\ref{alg}} & \phantom{abc} & \multicolumn{3}{c}{DSPG}\\ 
\cmidrule{2-4} \cmidrule{6-8} $n$ & Iterations & Time (s) & Gap &&  Iterations & Time (s) & Gap \\
$50$ & 59 & 0.06 & 2.85e-9 && 5000 & 284.83 & 1.46e-6\\ 
$75$ & 75 & 0.20 & 1.66e-8 && 5000 & 1430.53 & 1.36e-4\\
$100$ & 90 & 0.37 & 1.47e-8  && 5000 & 4352.69 & 1.48e-3\\
	\hline
	\multicolumn{8}{c}{$p=2$ (constrained case)} \\
	\hline		
		&\multicolumn{3}{c}{Algorithm~\ref{alg}} & \phantom{abc} & \multicolumn{3}{c}{DSPG}\\ 
\cmidrule{2-4} \cmidrule{6-8} $n$ & Iterations & Time (s) & Gap &&  Iterations & Time (s) & Gap \\
$50$ & 85 & 0.10 & 7.12e-9 && 4199 & 241.21 & 3.24e-5 \\ 
$75$ & 127 & 0.29 & 1.17e-8 && 5000 & 1420.47 & 8.55e-5\\
$100$ & 180 & 0.64 & 2.54e-8  && 5000 & 4707.48 & 2.81e-3\\
    \hline
	\multicolumn{8}{c}{$p = \left \lfloor{0.3*n}\right \rfloor\ $ (constrained case)} \\
	\hline		
		&\multicolumn{3}{c}{Algorithm~\ref{alg}} & \phantom{abc} & \multicolumn{3}{c}{DSPG}\\  
\cmidrule{2-4} \cmidrule{6-8} $n$ & Iterations & Time (s) & Gap &&  Iterations & Time (s) & Gap \\
$50$ & 170 & 0.28 & 1.60e-8 && 5000 & 290.35 & 3.02e-6 \\ 
$75$ & 149 & 0.32 & 2.93e-8 && 5000 & 1415.39 & 5.06e-4\\
$100$ & 220 & 0.91 & 3.81e-8  && 5000 & 4361.37 & 4.25e-3\\
\bottomrule 
\end{tabular}

\end{table}
The result in Table~\ref{table:comparesyn-mid} indicates that the proposed method is faster than DSPG, and it also outputs solutions with higher accuracy. In addition, DSPG takes more iterations for the convergence 
as shown in Figure~\ref{fig:graph-compare-n100-2}.
In particular, the projection onto $\Z$ in DSPG does not capture the structure 
of $\Q^\top$, therefore, the projection in DSPG is not effective compared to 
the projection discussed in Section~\ref{subproblem}.

\begin{figure}[tp]
    \centering
    \includegraphics[clip, trim=1.5cm 9cm 1.5cm 9cm,scale = 0.6]{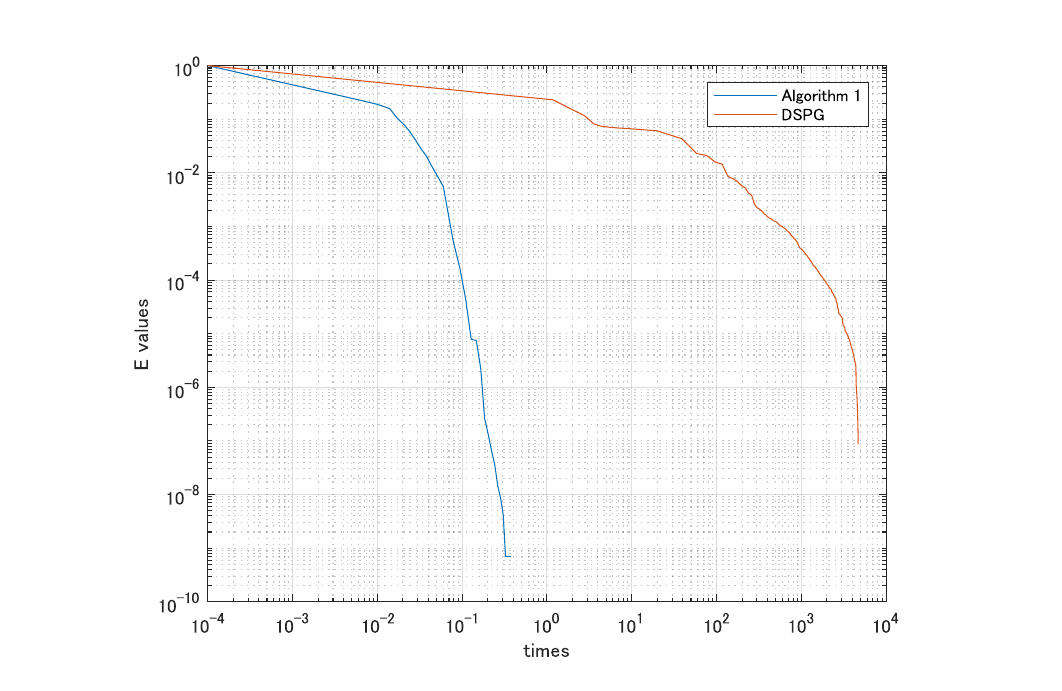}
    \caption{Comparison of the convergence rate for randomly generated synthetic data with $n = 100$ and $p = 2$}
    \label{fig:graph-compare-n100-2}
\end{figure}

Furthermore,
only Algorithm~\ref{alg} solves large instances with $n \ge 500$ in 5000 iterations, 
as indicated in Table~\ref{table:comparesyn-high}.
In the case of $n = 4000$, Algorithm~\ref{alg} can solve the unconstrained problem in 1455 seconds, and constrained problems in 4106 seconds.
\begin{table}[H]\centering \ra{1.3}
	\caption{Performance of Algorithm~\ref{alg} for large instances}
	\label{table:comparesyn-high} 
	\addtolength{\tabcolsep}{-2pt}
\scalebox{0.9}{
	\begin{tabular}{@{}rrrrrrrrrrrr@{}}\toprule &\multicolumn{3}{c}{$p=0$ (unconstrained)} & \phantom{abc} &\multicolumn{3}{c}{$p=2$ (constrained)} & \phantom{abc} &\multicolumn{3}{c}{ $p = \left \lfloor{0.3*n}\right \rfloor\ $ (constrained)}\\ 
			\cmidrule{2-4} \cmidrule{6-8} \cmidrule{10-12}  \multicolumn{1}{c}{$n$} & Iterations & Time (s) & Gap && Iterations & Time (s) & Gap && Iterations & Time (s) & Gap\\
	500 & 146 & 15.18 & 3.88e-9 && 241 & 21.73 & 1.89e-8 && 276 & 28.69 & 3.15e-8\\ 
	1000 & 113 &  54.92 & 4.31e-9 && 727 & 322.27 & 1.34e-8 && 311 & 155.72 & 1.39e-8\\
	2000 & 89 & 257.94 & 1.26e-9  && 411 & 1082.65 & 8.42e-9 && 234 & 697.72 & 1.31e-8\\
	4000 & 77 &  1455.74 & 5.23e-9 && 221 & 4106.26 & 1.42e-8 && 207 & 4120.75 & 1.65e-8\\
	\bottomrule 
	\end{tabular}}
	\end{table}




\subsection{Clustering Structure Covariance Selection}\label{subsec:covselection}

In this experiment, we generated input data following the procedure in 
Lin et al.~\cite{lin2020estimation}. We first generated a matrix 
in \cite{lin2020estimation}. 
We compute and construct the covariance matrix $\C \in \SMAT^n$ from $10n$ samples of the multivariate Gaussian distribution $\NC(\0, \BSigma)$, and  $n_G$ is the number of clusters of coordinates.
For the constraints, we set $\Omega_p$ with $p = \left \lfloor{0.3*n}\right \rfloor$. We employ the weight parameters
$\lambda = \frac{\rho}{N}$, where $N = \frac{n(n-1)}{2}$.


Similarly to Section~\ref{sec:syntheticexp}, we divide the experiments into two parts. 
The first part is the comparison between the three algorithms, 
and the second part is for
Algorithm~\ref{alg} with large matrices, where
we increase the matrix size $n$ and the number of clusters $n_G$.
The parameter of $\rho$ is $0.001$ in the first part, while $\rho$ is adjusted 
in each experiment in the second part
to balance the sparsity and the clustering structure
from the relative error and F-score obtained 
in preliminary experiments.

Table~\ref{table:comparecov-low} shows numerical results on the problem 
\eqref{primalexp} with clustering structure covariance selection with small matrices. 
The results in the table show that the Algorithm~\ref{alg} is still 
advantageous. It requires much less computational time than Logdet-PPA and has a 
better convergence speed than DSPG. The result of the case when $(n,n_G) = (25,5)$ 
shows that Algorithm~\ref{alg} can solve the problem in 0.18 
seconds, while DSPG and Logdet-PPA take 7.07 seconds and 13.82 seconds, respectively. 
From the results on medium matrices in Table~\ref{table:comparecov-mid}, 
Algorithm~\ref{alg} again attains a better convergence rate than 
DSPG.
Table~\ref{table:comparecov-high} shows the results on large instances. Algorithm~\ref{alg} is effective even for large problems. It can solve problems in a reasonable time and can output highly accurate solutions. It can satisfy the stopping criterion of $10^{-16}$ for the problem with a large matrix size $n = 4000$ in 14 minutes and 24 seconds. 


\begin{table}[H] \centering \ra{1.3}
\caption{Comparison using covariance selection data with small matrices}
\addtolength{\tabcolsep}{-2pt}
\scalebox{0.9}{
\begin{tabular}{@{}rrrrrrrrrrrr@{}}\toprule &\multicolumn{3}{c}{Algorithm~\ref{alg}} & \phantom{abc}& \multicolumn{3}{c}{DSPG} & \phantom{abc} & \multicolumn{3}{c}{Logdet-PPA}\\ 
\cmidrule{2-4} \cmidrule{6-8} \cmidrule{10-12} $(n , n_G)$ & Iterations & Time (s) & Gap &&  Iterations & Time (s) & Gap &&  Iterations & Time (s) & Gap \\
$(10,5)$ & 57 & 0.01 & 4.70e-11 && 132 & 0.02 & 2.85e-11 && 29 & 0.34 & 2.38e-7 \\ 
$(20,5)$ & 90 & 0.04 & 2.86e-10 && 626 & 0.55 & 1.49e-9 && 67 & 2.44 & 2.93e-7\\
$(25,5)$ & 467 & 0.18 & 4.55e-9 && 3435 & 7.07 & 5.31e-8 && 89 & 13.82 & 1.23e-6\\
\bottomrule 
\end{tabular}}
\label{table:comparecov-low} 
\end{table}
\begin{table}[H] \centering \ra{1.3}
\caption{Comparison using covariance selection data with medium matrices}
\begin{tabular}{@{}rrrrrrrr@{}}\toprule		
		&\multicolumn{3}{c}{Algorithm~\ref{alg}} & \phantom{abc} & \multicolumn{3}{c}{DSPG}\\ 
\cmidrule{2-4} \cmidrule{6-8} $(n , n_G)$ & Iterations & Time (s) & Gap &&  Iterations & Time (s) & Gap \\
$(50,5)$ & 146 & 0.22 & 1.88e-9  && 2518 & 153.35 & 1.39e-7 \\ 
$(75,5)$ & 908 & 2.74 & 1.94e-7 && 5000 & 1541.50 & 1.89e-4\\
$(100,5)$ & 135 & 0.67 & 2.86e-11  && 4670 & 4674.50 & 1.06e-7 \\
    \hline
\end{tabular}
\label{table:comparecov-mid} 
\end{table}
\begin{table}[H] \centering \ra{1.3}
\caption{The performance of Algorithm~\ref{alg} for covariance selection data in large matrices}
\begin{tabular}{@{}cccc@{}}\toprule &\multicolumn{3}{c}{Algorithm~\ref{alg}} \\ 
\cmidrule{2-4} $(n , n_G , \rho)$ & Iterations & Time(s) & Gap\\
$(500,10,10^{-3})$ & 168 & 17.18 & 5.72e-10\\ 
$(1000,20,10^{-4})$ & 87 & 42.28 & 2.65e-11\\
$(2000,50,10^{-4})$ & 73 & 195.78 & 4.85e-11\\
$(4000,50,10^{-5})$ & 61 & 864.55 & 1.09e-11\\ 
\bottomrule 
\end{tabular}
\label{table:comparecov-high} 
\end{table}

\subsection{Real Data Experiments}\label{sec:realdata}


We also executed experiments on a real animal dataset in \cite{kemp2008discovery}. 
The dataset consists of binary values which are the answers to $d = 102$ true-false questions on $n = 33$ animals. We followed the procedure used in~\cite{egilmez2017graph,lin2020estimation}, computed the input matrix $\C = \overline{\bm S} + \frac{1}{3}\I$, where $\overline{\bm S}$ is the sample covariance matrix and $\I$ is the identity matrix. We applied the model \eqref{primalexp} with no constraints and took $\rho = 0.01$ and $\lambda = \frac{4 \rho}{n(n-1)}$.

Table~\ref{table:compareanimal} summarizes the result of 
Algorithm~\ref{alg} and DSPG on the animal dataset problem, and Figure~\ref{fig:graph-compare-animal} show their convergence rates.
We did not include Logdet-PPA here due to being out of memory. 
From these results, we can also see that 
Algorithm~\ref{alg} can obtain a highly accurate 
the solution in a short computation time.

\begin{table}[H]\centering \ra{1.3}
\caption{Comparison of the performance for animal dataset}
\begin{tabular}{@{}rrrrrrrr@{}}\toprule		
		&\multicolumn{3}{c}{Algorithm~\ref{alg}} & \phantom{abc} & \multicolumn{3}{c}{DSPG}\\ 
\cmidrule{2-4} \cmidrule{6-8} $n$ & Iterations & Time (s) & Gap &&  Iterations & Time (s) & Gap \\
$33$ & 29 & 0.03 & 2.50e-11  && 497 & 5.03 & 5.44e-9\\
    \hline
\end{tabular}
\label{table:compareanimal} 
\end{table}
\begin{figure}[H]
	\centering
	\includegraphics[clip, trim=1.5cm 9cm 1.5cm 9cm,scale = 0.6]{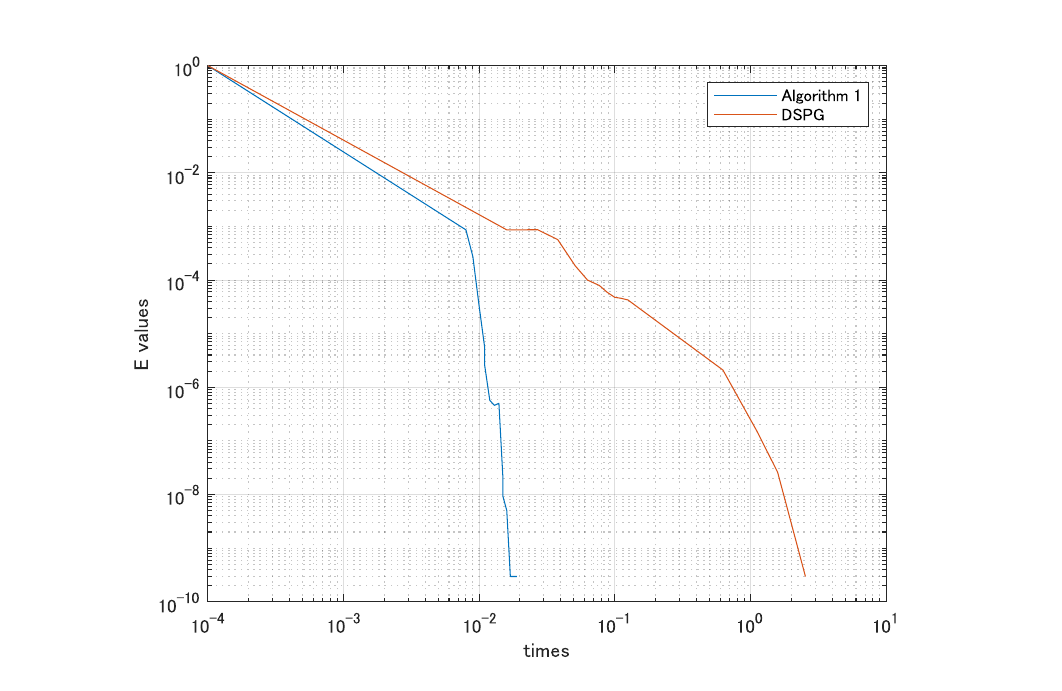}
	\caption{Comparison of the convergence rate for Animal Dataset}
	\label{fig:graph-compare-animal}
\end{figure}

\section{Conclusion}\label{sec:conclusion}

In this paper, we extended the dual spectral projected gradient method in \cite{nakagaki2020dual} to
the novel DSPG method (Algorithm~\ref{alg})
for solving \eqref{dual_pre}.
To reduce the length of the gradient vector of the dual objective function, we replaced $\Q^\top (\bm Z)$ with the new variable matrix $\bm S$,
and we developed an efficient method to compute the projection onto $\SC$.
We established the convergence
of Algorithm~\ref{alg} to the optimal value.
We also showed that the projection in the proposed method can be computed in $\OC(n^2 \log n)$ operations using the pool-adjacent-violators algorithm.
The results from numerical experiments on randomly generated synthetic data, covariance selection, and animal data indicate that the proposed method obtains accurate solutions in a short computation time and solves large instances.

One of the future directions of our research is an extension of the DSPG algorithm for 
solving more general types of log-determinant semidefinite programming, for example, 
sparse and locally constant Gaussian graphical models proposed by 
Honorio et al.~\cite{honorio2009sparse}. 

\section*{Data Availability}
The test instances in Sections 5.1 and 5.2 were generated randomly following the steps described in these sections.
The test instance in Section 5.3 was generated based on the dataset in \cite{kemp2008discovery}.

\section*{Conflict of Interest}
All authors have no conflicts of interest.

\section*{Acknowledgments}
The research of M.~Y.~was partially supported by JSPS KAKENHI (Grant Number: 21K11767).

\begin{small}
\bibliography{mybib.bib}
\end{small}

\end{document}